\date{}
\title{\Large\bf Uniform boundedness in function spaces}
\author{{\bf L'ubica Hol\'a}\\
Mathematical Institute, Slovak Academy of Sciences,\\
\v Stef\'anikova 49, SK-814 73 Bratislava, Slovakia\\
{\sf lubica.hola@mat.savba.sk}\\\\
{\bf Ljubi\v sa D.R. Ko\v cinac}\\
University of Ni\v s, Faculty of Sciences and Mathematics, \\18000 Ni\v s, Serbia\\
{\sf lkocinac@gmail.com} }
\newtheorem{theorem}{{\bf Theorem}}[section]
\newtheorem{corollary}[theorem]{{\bf Corollary}}
\newtheorem{definition}[theorem]{{\bf Definition}}
\newtheorem{proposition}[theorem]{{\bf Proposition}}
\newcommand{\naturals}{{\mathbb N}}
\begin{document}

\maketitle

%\begin{abstract}
%We investigate several boundedness properties of function spaces
%considered as uniform spaces.
%\end{abstract}

%\medskip
%\noindent {\sf 2010 Mathematics Subject Classification}: Primary:
%54C35; Secondary: 46A04, 54D20, 54E15.

%\noindent {\sf Keywords}: Function spaces, $\omega$-bounded,
%(strictly) {\sf M}-bounded, (strictly) {\sf H}-bounded, {\sf
%R}-bounded, pseudocompact, Fr\'echet space.

\medskip
\section{Introduction}

We begin with some basic information about uniform and function
spaces. Our topological notation and terminology are standard (see
\cite{engelking}). By $\mathbb N$ and $\mathbb R$ we denote the set
of natural and real numbers, respectively.

\medskip
\noindent {\bf 1.1. Uniform spaces}

\smallskip
Let $X$ be a nonempty set. A family $\mathbb U$ of subsets of
$X\times X$ satisfying conditions
\begin{itemize}
\item[(U1)] each $U\in \mathbb U$ contains the diagonal $\Delta_X=
\{(x,x):x\in X\}$ of $X$;
\item[(U2)] if $U,V\in \mathbb U$, then $U\cap V \in \mathbb U$;
\item[(U3)] if $U\in \mathbb U$ and $V\supset U$, then $V\in \mathbb
U$;
\item[(U4)] for each $U\in \mathbb U$ there is
$V\in \mathbb U$ with $V\circ V := \{(x,y)\in X\times X:\exists z\in
V \mbox{ such that } (x,z)\in V, (z,y) \in V\} \subset U$;
\item[(U5)] for each $U\in \mathbb U$, $U^{-1} :=\{(x,y)\in X\times
X: (y,x)\in U\} \in \mathbb U$
\end{itemize}
is called a \emph{uniformity} on $X$.

Elements of the uniformity $\mathbb U$ are called \emph{entourages}.
For any entourage $U \in \mathbb U$, a point $x\in X$ and a subset
$A$ of $X$ one defines the set
\[
U[x] :=\{y\in X: (x,y)\in U\}
\]
called the \emph{$U$-ball with the center $x$}, and the set
\[
U[A] := \bigcup_{a\in A}U[a]
\]
called the \emph{$U$-neighborhood} of $A$.

\medskip
In \cite{koc1}, several boundedness properties of uniform spaces
were introduced and studied. We recall definitions of those
properties.

\begin{definition}\rm  A uniform space $(X,\mathbb U)$
is called:
\begin{itemize}
\item[(1)] \emph{totally bounded} (resp. \emph{$\omega$-bounded})
if for each $U\in \mathbb U$ there is a finite (resp. countable) set
$A \subset X$ such that $X = U[A]$. $X$ is \emph{$\sigma$-totally
bounded} if it is a union of countably many totally bounded
subspaces;

\item[(2)] \emph{Menger bounded} (or \emph{{\sf M}-bounded} for
short) if for each sequence $(U_n:n\in \mathbb N)$ of entourages
there is a sequence $(F_n:n\in\mathbb N)$ of finite subsets of $X$
such that $X = \bigcup_{n\in\mathbb N}U_n[F_n]$ \cite{koc1, koc2};

\item[(3)] \emph{Hurewicz bounded} (or \emph{{\sf H}-bounded}) if for
each sequence $(U_n:n\in \mathbb N)$ of entourages there is a
sequence $(F_n:n\in\mathbb N)$ of finite subsets of $X$ such that
each $x \in X$ belongs to all but finitely many $U_n[F_n]$
\cite{koc1, koc2};

\item[(4)] \emph{Rothberger bounded} (or \emph{{\sf R}-bounded}) if for
each sequence $(U_n:n\in \mathbb N)$ of entourages there is a
sequence $(x_n:n\in\mathbb N)$ of elements of $X$ such that $X =
\bigcup_{n\in\mathbb N}U_n[x_n]$ \cite{koc1, koc2}.
\end{itemize}
\end{definition}

To each of the above boundedness properties one can correspond a
game on $(X,\mathbb U)$. For example, the game corresponded to {\sf
M}-boundedness is the following. Players ONE and TWO play a round
for each $n\in \mathbb N$. In the $n$-th round ONE chooses an
element $U_n\in\mathbb U$, and TWO responds by choosing a finite set
$A_n \subset X$. TWO wins a play
\[
U_1, A_1; U_2, A_2;  \cdots; U_n, A_n; \cdots
\]
if $X = \bigcup_{n\in\mathbb N}U_n[A_n]$; otherwise ONE wins.

\smallskip
A uniform space $(X,\mathbb U)$ is said to be \emph{strictly {\sf
M}-bounded} if TWO has a winning strategy in the above game
(\cite{koc1, koc2}).

In a similar way we define the games associated to {\sf
H}-boundedness and {\sf R}-boundedness, and  \emph{strictly {\sf
H}-bounded} and \emph{strictly {\sf R}-bounded} uniform space.

\medskip
\noindent {\bf 1.2. Function spaces}

\smallskip
Let $X$ be a Tychonoff space, $(Y,d)$ be a metric space  and
$C(X,Y)$ be the set of continuous functions from $X$ to $Y$. In case
$Y = \mathbb R$ we write $C(X)$ instead of $C(X,\mathbb R)$. If $y
\in (Y,d)$ and $\lambda
> 0$, we put
$S(y, \lambda) = \{z \in Y: d(y,z) < \lambda\}$ and  $B(y, \lambda)
= \{z \in Y : d(y, z) \le \lambda\}$.

There are several uniformities on the set $C(X,Y)$. Let us use the
following notation. $\mathcal F(X)$ is the set of finite subsets of
$X$, $\mathcal K(X)$ the set of compact subsets of $X$, and $C^+(X)$
the set of positive real-valued functions on $X$. If $\varepsilon
> 0$ and $\mathcal C$ is a collection of subsets of $X$, then one
defines on $C(X,Y)$ the uniformity $\mathbb U_{\mathcal C}$ of
uniform convergence on elements of $\mathcal C$ generated by the
sets
\[
W(A,\varepsilon) = \{(f,g) \in C(X,Y)^2: d(f(x),g(x)) < \varepsilon
\ \forall \ x\in A\}, \ \ A \in \mathcal C, \varepsilon > 0.
\]
We call $\mathbb U_{\mathcal F(X)}$ ($\mathbb U_{\mathcal K(X)}$)
the \emph{uniformity of pointwise convergence} (\emph{the uniformity
of uniform convergence on compacta}) and write $\mathbb U_p$ for
$\mathbb U_{\mathcal F(X)}$ and $\mathbb U_k$ for $\mathbb
U_{\mathcal K(X)}$. Topologies on $C(X,Y)$ generated by $\mathbb
U_p$ and $\mathbb U_k$ are the topology $\tau_p$ of pointwise
convergence and compact-open topology $\tau_k$.

Another two uniformities on $C(X,Y)$ we use in this article are the
\emph{uniformity $\mathbb U_u$ of uniform convergence} generated by
the sets of the form
\[
B_{\varepsilon} = \{(f,g)\in C(X,Y)^2: d(f(x),g(x))< \varepsilon \
\forall \ x\in X\}, \ \ \varepsilon > 0,
\]
and the \emph{$m$-uniformity} $\mathbb U_m$ generated by the sets of
the form
\[
D_{\varepsilon} = \{(f,g)\in C(X,Y)^2: d(f(x),g(x))< \varepsilon(x)
\ \forall \ x\in X\}, \ \ \varepsilon \in C^+(X).
\]
Topologies generated by these two uniformities are the
\emph{topology $\tau_u$ of uniform convergence } and
\emph{$m$-topology} $\tau_m$, respectively.

In \cite{hola-koc} we investigated spaces $C(X)$ endowed with the
mentioned topologies considering those spaces as Hausdorff
topological groups with the pointwise addition. In this paper we
consider boundedness properties of  uniform spaces $C(X,Y)$ equipped
with the above mentioned uniformities.

The reader interested in an investigation of spaces $C(X,Y)$ can
consult the papers \cite{DiMaio-Hola-Holy-McCoy, hola-jindal,
Hola-Mccoy, hola-zsilinsky} and the books \cite{arh, dimaio-hola,
MN}.

\section{Results}

\subsection{Preliminary results}

We begin with the following facts, which are either known (see
\cite{koc1, koc2}) or easy to prove.

\medskip
{\bf Fact 1.} A metric space  $(Z,d)$ is $\omega$-bounded if and
only if it is separable. [If $Z$ is $\omega$-bounded, then for each
$n\in \mathbb N$ there is a countable set $A_n$ such that $Z =
\bigcup_{a\in A_n}S(a,1/n)$. Then $A = \bigcup_{n\in\mathbb N}A_n$
is a countable dense subset of $Z$. Conversely, if $Z$ is separable
with a countable dense set $A\subset Z$, then for any $\varepsilon >
0$ and any $x\in Z$ there is an $a\in A$ such that $d(x,a)<
\varepsilon$, hence $Z = \bigcup_{a\in A}S(a,\varepsilon)$.]

\smallskip
{\bf Fact 2.} If  a uniform space $(Z,\mathbb U)$ is a uniformly
continuous image of an $\omega$-bounded (Menger bounded, Hurewicz
bounded, Rothberger bounded) uniform space $(T,\mathbb V)$, then
$(Z,\mathbb U)$ is also $\omega$-bounded (Menger bounded, Hurewicz
bounded, Rothberger bounded).

\smallskip
{\bf Fact 3.} If a uniform space $(T,\mathbb U_T)$ is a subspace of
an $\omega$-bounded,  (strictly) Menger (Hurewicz, Rothberger)
bounded space $(Y,\mathbb U)$, then $(T,\mathbb U_T)$ is also
$\omega$-bounded, (strictly) Menger (Hurewicz, Rothberger) bounded.

\smallskip
{\bf Fact 4.} If  a uniform space $(Z,\mathbb U)$ is
$\sigma$-totally bounded, then $(Z,\mathbb U)$ is strictly Hurewicz
bounded.

\medskip
The following diagram gives relations among the mentioned properties
({\sf TB}, {\sf MB}, {\sf SMB}, {\sf HB}, {\sf SHB}, {\sf RB}, {\sf
SRB} and {\sf met} is notation for totally bounded, {\sf M}-bounded,
strictly {\sf M}-bounded, {\sf H}-bounded, strictly {\sf H}-bounded,
{\sf R}-bounded, strictly {\sf R}-bounded and metrizable,
respectively).

\[
\sigma\!\!-\!\!{\sf TB}\hskip0.3cm  \Rightarrow \hskip0.3cm {\sf
 SHB} \hskip0.3cm \overset{{\sf met}}{\Leftrightarrow } \hskip0.3cm {\sf SMB}
\hskip0.3cm \overset{{\sf met}}{\Leftrightarrow} \hskip0.3cm
\hskip0.3cm {\sf HB} \hskip0.3cm \Rightarrow \hskip0.3cm {\sf MB}
\hskip0.3cm \Leftarrow \hskip0.3cm {\sf RB} \hskip0.3cm \Leftarrow
\hskip0.3cm {\sf SRB}
\]
\[
\hskip4.5cm\Downarrow
\]
\[
\hskip4.3cm\omega\!\!-\!\!\mbox{{\sf bounded}}
\]

\medskip
Evidently,
\[
(C(X,Y),\mathbb U_m) \Rightarrow \ (C(X.Y), \mathbb U_u) \Rightarrow
(C(X,Y),\mathbb U_k) \Rightarrow (C(X,Y),\mathbb U_p)
\]
for each property $\mathcal P \in \{\omega\!\!-\!\!{\rm bounded},
{\sf HB}, {\sf SHB}, {\sf MB}, {\sf SMB},  {\sf RB}, {\sf SRB}\}$.

\medskip
The following two propositions will be used in the next sections.

\begin{proposition}\label{count.base}
Let $(Z,\mathbb U)$ be a uniform space with a countable base. The
following are equivalent:
\begin{itemize}
\item[{\rm (1)}] $(Z,\mathbb U)$ is strictly {\sf H}-bounded;

\item[{\rm (2)}]  $(Z,\mathbb U)$ is strictly {\sf M}-bounded;

\item[{\rm (3)}] $(Z,\mathbb U)$ is {\sf H}-bounded;

\item[{\rm (4)}]  $(Z,\mathbb U)$ is $\sigma$-totally bounded.
\end{itemize}
\end{proposition}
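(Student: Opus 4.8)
The plan is to establish a cycle of implications. The diagram already gives several of the arrows for free: from Fact 4 we have $(4)\Rightarrow(1)$, and the general diagram gives $(1)\Rightarrow(2)$ and $(2)\Rightarrow(3)$ (the latter since strict {\sf M}-boundedness implies {\sf M}-boundedness, but here we want {\sf H}; actually the cleanest route is $(1)\Rightarrow(3)$, i.e. strictly {\sf H}-bounded $\Rightarrow$ {\sf H}-bounded, which is immediate, and then separately $(2)\Rightarrow(3)$ via the metrizability equivalences in the diagram, since a uniform space with a countable base is pseudometrizable). So the only real content is the implication $(3)\Rightarrow(4)$: an {\sf H}-bounded uniform space with a countable base is $\sigma$-totally bounded. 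I would organize the write-up as: note $(4)\Rightarrow(1)\Rightarrow(2)\Rightarrow(3)$ using the quoted Facts and diagram, then prove $(3)\Rightarrow(4)$ in detail.

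For $(3)\Rightarrow(4)$, fix a countable base $\{V_n : n\in\naturals\}$ for $\mathbb U$, which we may assume is decreasing (replace $V_n$ by $V_1\cap\cdots\cap V_n$) and symmetric. Apply {\sf H}-boundedness to the sequence $(V_n : n\in\naturals)$: there are finite sets $F_n\subset Z$ such that every $z\in Z$ lies in $V_n[F_n]$ for all but finitely many $n$. The claim is then that $Z=\bigcup_{n\in\naturals} Z_n$ where $Z_n := \bigcap_{m\ge n} V_m[F_m]$, because each point is eventually in all the $V_m[F_m]$, hence lies in some $Z_n$. The remaining point is that each $Z_n$ is totally bounded as a subspace. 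To see this, take any entourage $U\in\mathbb U$; pick $k\ge n$ with $V_k\subset U$ (possible since the $V_m$ form a decreasing base). Then $Z_n\subset V_k[F_k]\subset U[F_k]$, and since $F_k$ is finite and $F_k$ can be taken inside... — here one should be slightly careful that total boundedness of the subspace wants the centers in $Z_n$, but $V_k[F_k]\cap Z_n$ is covered by finitely many $U'$-balls centered at points of $Z_n$ by a standard argument (choose for each $f\in F_k$ meeting an appropriate smaller ball a representative point of $Z_n$, using $V\circ V\subset U$). So $Z_n$ is totally bounded, giving the desired $\sigma$-total boundedness.

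The step I expect to be the main obstacle is precisely this last subspace total-boundedness verification: showing $Z_n=\bigcap_{m\ge n}V_m[F_m]$ is totally bounded \emph{as a uniform subspace}, where the covering sets must be $U$-balls with centers in $Z_n$, not merely in $F_m$. The fix is the familiar triangle trick: given $U$, choose $W\in\mathbb U$ symmetric with $W\circ W\circ W\subset U$ and then $k\ge n$ with $V_k\subset W$; for each $f\in F_k$ for which $W[f]\cap Z_n\ne\emptyset$ choose $z_f\in W[f]\cap Z_n$, and then $Z_n\subset V_k[F_k]\subset\bigcup W[f]\subset\bigcup W\circ W[z_f]\subset\bigcup U[z_f]$, a finite union of $U$-balls centered in $Z_n$. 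Everything else is bookkeeping with the Facts and the diagram. I would also remark that, by Fact 3 (subspaces inherit these properties), the equivalence transfers to any uniform space embeddable with a countable base, which is the form in which it will be applied to the function-space uniformities.
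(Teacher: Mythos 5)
Your implication $(3)\Rightarrow(4)$ is essentially the paper's own argument (decreasing countable base $V_n$, finite sets $F_n$ from {\sf H}-boundedness, $Z_n=\bigcap_{m\ge n}V_m[F_m]$, and $Z_n\subset V_k[F_k]\subset U[F_k]$ for $k$ large), and your ``triangle trick'' to relocate the centers of the covering balls into $Z_n$ is a legitimate refinement of a point the paper glosses over. The problem is the other half of your cycle. You close the loop with $(2)\Rightarrow(3)$ by appealing to ``the metrizability equivalences in the diagram,'' but the arrow ${\sf SMB}\overset{{\sf met}}{\Leftrightarrow}{\sf HB}$ in that diagram is precisely (part of) the statement of this proposition; invoking it here is circular, and nothing else in the paper establishes it independently. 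There is also no cheap way around it: a single play of the Menger-type game against the sequence $(V_n)$, even following a winning strategy, only yields $Z=\bigcup_n V_n[A_n]$, which is the Menger-type covering and not the ``all but finitely many'' Hurewicz conclusion, nor $\sigma$-total boundedness.

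The paper therefore proves $(2)\Rightarrow(4)$ directly by a tree argument that your proposal omits: letting $\sigma$ be a winning strategy of TWO, it defines for every finite sequence $s=(n_1,\dots,n_k)$ of indices the finite set $F_{n_1,\dots,n_k,n}=\sigma(U_{n_1},\dots,U_{n_k},U_n)$ and the set $Y_s=\bigcap_{n\in\naturals}U_n[F_{n_1,\dots,n_k,n}]$. Each $Y_s$ is totally bounded by the same cofinality argument as in $(3)\Rightarrow(4)$, and if some $z$ lay outside every $Y_s$ one could inductively extract indices $n_1,n_2,\dots$ producing a $\sigma$-play lost by TWO, a contradiction; hence $Z$ is the countable union of the $Y_s$. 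You need to supply this (or an equivalent) argument. As written, your proof establishes the equivalence of (1), (3) and (4), with (2) following from them, but not (2) implying any of them.
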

\begin{proof} (1) $\Rightarrow$ (2) and (1) $\Rightarrow$ (3) are
trivial, while (4) $\Rightarrow$ (1) follows from Fact 4. Therefore,
we have to prove (3) $\Rightarrow$ (4) and (2) $\Rightarrow$ (4).

\smallskip
(3) $\Rightarrow$ (4) Let $(U_n:n\in\mathbb N)$ be a sequence of
elements of $\mathbb U$. Without loss of generality one can assume
that each $U_n$ belongs to a countable base $\mathbb B$ of $\mathbb
U$, and that $U_1 \supset U_2 \supset \ldots$. By (3) there is a
sequence $(A_n:n\in\mathbb N)$ of finite subsets of $Z$ such that
each $z\in Z$ belongs to $U_n[A_n]$ for all but finitely many $n$.
For each $n\in \mathbb N$ put $Y_n = \bigcap_{i\ge n}U_i[A_i]$. Then
$Z = \bigcup_{n\in\mathbb N}Y_n$. To finish the proof we should
prove that each $Y_n$ is totally bounded.

Let $n\in \mathbb N$ be fixed and let $U_m\in \mathbb U$. Pick $k\in
\mathbb N$ such that $k
> \max\{m,n\}$. Then $Y_n \subset U_k[A_k] \subset U_m[A_k]$, i.e.
$Y_n$ is totally bounded.

\smallskip
(2) $\Rightarrow $ (4) We suppose that $U_n$s in $\mathbb U$ are as
in the proof of (3) $\Rightarrow$ (4). Let $\sigma$ be the winning
strategy of TWO. For each $n\in \mathbb N$ set $\sigma(U_n) = F_n$,
a finite subset of $Z$, and define $Y_0 = \bigcap_{n\in \mathbb
N}U_n[F_n]$. Denote by $S$ the set of all finite sequences in
$\mathbb N$. For a given $s = (n_1, n_2  \ldots, n_k) \in S$ and
$n\in\mathbb N$, set $F_{n_1, \dots, n_k,n} = \sigma(U_{n_1},
U_{n_2}, \ldots, U_{n_k}, U_n)$, a finite subset of $Y$, define $Y_s
= \bigcap_{n\in\mathbb N} U_{n}[F_{n_1,\dots,n_k,n}]$ and $Y =
\bigcup \{Y_s: s \in S\}$.

\smallskip
\noindent {\bf Claim 1.} $Z = Y$.
\smallskip
Suppose that there is $z \in Z \setminus Y$. Then we can find
inductively $n_1, n_2, \ldots$ in $\mathbb N$   and finite subsets
$F_{n_1}, F_{n_1,n_2}, \ldots $ such that $z \notin
U_{n_1}[F_{n_1}]$, $z\notin U_{n_2}[F_{n_1, n_2}], \ldots$. In this
way we obtain a $\sigma$-play
\[
U_{n_1}, F_{n_1}; U_{n_2}, F_{n_1,n_2}; U_{n_3}, F_{n_1,n_2,n_3};
\ldots
\]
lost by TWO, which is a contradiction.

\smallskip
{\bf Claim 2.} Every $Y_s$ is totally bounded.

\smallskip
Let  $s = (n_1, \ldots, n_k)$ and $U_i \in\mathbb U$ be given.  Take
$n \in \mathbb N$ so that $U_n \subset U_i$. Clearly, $Y_s \subset
U_n[F_{n_1, \ldots, n_k,n}] \subset U_i[F_{n_1, \ldots, n_k,n}]$,
i.e. $Y_s$ is totally bounded. \, \, $\Box$
\end{proof}

\begin{proposition} \label{prop2}  Let $(Z,\mathbb U)$  be a Hausdorff uniform space with a countable base.
The following are equivalent:
\begin{itemize}
\item[{\rm  (1)}] $(Z,\mathbb U)$ is strictly {\sf R}-bounded;

\item[{\rm (2)}] $Z$ is countable.
\end{itemize}
\end{proposition}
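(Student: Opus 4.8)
The plan is to prove both implications. The direction (2) $\Rightarrow$ (1) is the easy one: if $Z = \{z_n : n \in \mathbb N\}$ is countable, then TWO's strategy, regardless of what entourage ONE plays in round $n$, is simply to pick the point $z_n$. Since every $U_n$ contains the diagonal, $z_n \in U_n[z_n]$, so $Z = \bigcup_{n} U_n[z_n]$ and TWO wins every play. This uses only (U1) and does not even need the countable base or the Hausdorff hypothesis.

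For (1) $\Rightarrow$ (2), I would mimic the structure of the proof of (2) $\Rightarrow$ (4) in Proposition~\ref{count.base}. Fix a countable base $\mathbb B = \{B_n : n \in \mathbb N\}$ of $\mathbb U$ which, by (U4) and (U5), we may assume consists of symmetric entourages linearly ordered by reverse inclusion, and which moreover ``squares down'', i.e. $B_{n+1} \circ B_{n+1} \subset B_n$; such a base exists in any uniform space with a countable base. Let $\sigma$ be a winning strategy for TWO in the Rothberger game. The key point is that TWO's responses are now single points rather than finite sets. Following the tree notation of the earlier proof, for each finite sequence $s = (n_1, \dots, n_k)$ of naturals set $x_{s} = \sigma(B_{n_1}, \dots, B_{n_k})$, a single point of $Z$, and let $T = \{x_s : s \text{ a finite sequence in } \mathbb N\}$, a countable set. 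I claim $Z = T$. If not, pick $z \in Z \setminus T$; as in Claim 1 above, build inductively a play $B_{n_1}, x_{(n_1)}; B_{n_2}, x_{(n_1,n_2)}; \dots$ in which ONE, at stage $k$, chooses $B_{n_k}$ small enough that $z \notin B_{n_k}[x_{(n_1,\dots,n_k)}]$ — this is possible because $z \neq x_{(n_1,\dots,n_{k-1},m)}$ for every $m$, so by Hausdorffness (which separates $z$ from each of these countably many points, though here we only need to avoid one point at a time using a small enough basic entourage) ONE can find such an index. Then the resulting play is won by ONE, contradicting that $\sigma$ is winning for TWO. Hence $Z = T$ is countable.

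The main subtlety — and the place to be careful — is the inductive construction of ONE's play: at stage $k$ we have already fixed $n_1, \dots, n_{k-1}$, hence the point $x_{(n_1,\dots,n_{k-1})}$ and, crucially, all the candidate points $x_{(n_1,\dots,n_{k-1},m)}$ for $m \in \mathbb N$ are determined once we decide $n_k = m$. So ONE must pick $n_k$ so that simultaneously $z \notin B_{n_k}[x_{(n_1,\dots,n_{k-1},n_k)}]$, a self-referential-looking requirement. This is handled exactly as in Proposition~\ref{count.base}: enumerate and diagonalize, or more simply observe that for the Rothberger game TWO's move $x_{(n_1,\dots,n_k)}$ depends on $B_{n_k}$, and since $z$ is a single point, picking $B_{n_k}$ disjoint-enough from $z$ in the Hausdorff uniformity forces $z \notin B_{n_k}[x_{(n_1,\dots,n_k)}]$ whenever $d$-type separation gives $x_{(n_1,\dots,n_k)} \notin B_{n_k}[z] = B_{n_k}^{-1}[z]$; symmetry of $B_{n_k}$ then yields the conclusion. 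One must check this can be done for infinitely many stages, which follows because the base is countable and at each stage only finitely many constraints are active — precisely the argument underlying Claim 1 in the previous proposition. I expect no genuine obstacle beyond keeping the tree bookkeeping consistent with that earlier proof.
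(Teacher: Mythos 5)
Your direction $(2) \Rightarrow (1)$ is fine. The direction $(1) \Rightarrow (2)$ contains a genuine gap: the claim $Z = T$, where $T$ is the (countable) set of all points that TWO's strategy can ever produce, is false in general, and the inductive construction of ONE's play breaks down at exactly the point you flag as the ``main subtlety''. At stage $k$ you must find $n_k$ with $z \notin B_{n_k}[x_{(n_1,\dots,n_k)}]$, and you derive its existence from $z \neq x_{(n_1,\dots,n_{k-1},m)}$ for all $m$. But Hausdorffness only provides, for each fixed $m$, \emph{some} entourage separating $z$ from $x_{(n_1,\dots,n_{k-1},m)}$, not $B_m$ itself; it can perfectly well happen that $z \in B_m[x_{(n_1,\dots,n_{k-1},m)}]$ for \emph{every} $m$, because the point $x_{(n_1,\dots,n_{k-1},m)}$ varies with $m$ and may approach $z$ as fast as the entourages shrink. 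A concrete counterexample to $Z=T$: let $Z=\{0\}\cup\{1/n:n\in\naturals\}\subset\reals$ with the usual uniformity, and let TWO, in response to an entourage containing a basic one of radius $2^{-m}$, play some $1/N$ with $1/N<2^{-m}$ in even rounds and enumerate the points $1/j$ in odd rounds. This is a winning strategy that never plays $0$, so $0\in Z\setminus T$, and ONE has no play against it that leaves $0$ uncovered.

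The paper's proof sidesteps this by never claiming $Z=T$. For $s=(n_1,\dots,n_k)$ it puts $Y_s=\bigcap_{n\in\naturals}U_n[\varphi(U_{n_1},\dots,U_{n_k},U_n)]$ and shows $Z=\bigcup_s Y_s$; there the statement $z\notin Y_s$ \emph{is}, by definition, the existence of the next index $n_{k+1}$, so the induction has no self-reference to resolve. Countability then comes from the fact that there are only countably many $s$ and each $Y_s$ has at most one element, which is where Hausdorffness (in the form $\bigcap_n U_n=\Delta_Z$, after refining the base so that the $U_n$ are symmetric and $U_{n+1}\circ U_{n+1}\subset U_n$) is actually used. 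Your argument is missing precisely this step: each branch $s$ may contribute one additional ``limit'' point that lies in every $U_n[x_{(n_1,\dots,n_k,n)}]$ without ever being played by TWO, and these points must be accounted for separately.
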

\begin{proof} Only (1) implies (2) need the proof. Let $(U_n : n \in
\mathbb N)$ be a countable base for $\mathbb U$ such that $U_1
\supset U_2 \supset \cdots$ and $\bigcap_{n\in\mathbb N}U_n=
\Delta_Z$. The proof is similar to the proof of (2) $\Rightarrow$
(4) in the previous proposition. Let $\varphi$ be a strategy of TWO.
Then for each $U_n$ TWO picks a point $z_n = \varphi(U_n) \in Z$.
Define $Y_0 = \bigcap_{n\in \mathbb N}U_n[z_n]$, and for a given
finite sequence $n_1,\cdots, n_k$ in $\mathbb N$, define
$Y_{n_1,n_2, \ldots,n_k} = \bigcap_{n\in \mathbb N}
U_n[\varphi(U_{n_1}, U_{n_2}, \ldots, U_{n_k}, U_n)]$. As in
Proposition \ref{count.base} define $Y_s$ and $Y$, and prove (with a
suitable modification) that $Z= \bigcup_{s \in S}Y_{s}$.

Finally, using $\bigcap_{n\in\mathbb N}U_n= \Delta_Z$, we easily
prove that each $Y_{s}$ has at most one element, so that $Z$ is
countable. \, \, $\Box$
 \end{proof}

\subsection{$(C(X,Y),\mathbb U_p)$ and $(C(X,Y),\mathbb U_k)$}

The next results give information about the uniform spaces with the
uniformity of pointwise convergence.

\begin{theorem} \label{cp-omega-bound} Let $X$ be a Tychonoff space and $(Y,d)$ be a metric space. The
following are equivalent:
\begin{itemize}
\item[{\rm (1)}] $(C(X,Y),\mathbb U_p)$ is $\omega$-bounded;

\item[{\rm (2)}] $(Y,d)$ is separable.
\end{itemize}
\end{theorem}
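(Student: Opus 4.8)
The plan is to establish the two implications separately, in each case reducing to the two basic metric facts recalled above.

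\smallskip
For $(1)\Rightarrow(2)$, I would fix a point $x_0\in X$ and consider the evaluation map $e_{x_0}\colon C(X,Y)\to Y$, $e_{x_0}(f)=f(x_0)$. Since every constant function belongs to $C(X,Y)$ (the case $Y=\emptyset$ being trivial), $e_{x_0}$ is onto; and it is uniformly continuous from $(C(X,Y),\mathbb U_p)$ to $(Y,d)$ because $(f,g)\in W(\{x_0\},\varepsilon)$ forces $d(e_{x_0}(f),e_{x_0}(g))<\varepsilon$. Hence $(Y,d)$, with its metric uniformity, is a uniformly continuous image of an $\omega$-bounded uniform space, so it is $\omega$-bounded by Fact 2, and therefore separable by Fact 1.

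\smallskip
For $(2)\Rightarrow(1)$, note first that the sets $W(A,\varepsilon)$ with $A\in\mathcal F(X)$ and $\varepsilon>0$ form a base for $\mathbb U_p$, since $W(A\cup B,\min\{\varepsilon,\delta\})\subseteq W(A,\varepsilon)\cap W(B,\delta)$; so it suffices, for each fixed $A=\{x_1,\dots,x_n\}$ and $\varepsilon>0$, to cover $C(X,Y)$ by countably many $W(A,\varepsilon)$-balls centered at points of $C(X,Y)$. Consider the restriction map $\pi_A\colon C(X,Y)\to Y^A$ and equip $Y^A\cong Y^n$ with the metric $\rho(u,v)=\max_{i}d(u(x_i),v(x_i))$; then $W(A,\varepsilon)[g]$ is precisely the $\pi_A$-preimage of the open $\rho$-ball of radius $\varepsilon$ about $\pi_A(g)$. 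The space $Y^n$ is separable as a finite power of the separable space $Y$, hence so is its subspace $\pi_A(C(X,Y))$, metric separability being hereditary. Picking a countable $\rho$-dense subset of $\pi_A(C(X,Y))$ together with a preimage $g_k\in C(X,Y)$ of each of its members, we obtain $C(X,Y)=\bigcup_k W(A,\varepsilon)[g_k]$: for any $f\in C(X,Y)$ the point $\pi_A(f)$ lies within $\rho$-distance $\varepsilon$ of some $\pi_A(g_k)$, which says exactly that $f\in W(A,\varepsilon)[g_k]$.

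\smallskip
The one point that needs care is the last one: one cannot in general realize prescribed values at the points of $A$ by a continuous map into $Y$ (e.g. if $Y$ is disconnected), so it is essential to take the centers $g_k$ to be preimages of points of the image $\pi_A(C(X,Y))$ rather than of arbitrary points of $Y^A$. Once this is observed the verification is routine; alternatively one may phrase $(2)\Rightarrow(1)$ by viewing $(C(X,Y),\mathbb U_p)$ as a uniform subspace of a product and invoking Fact 3, but the direct computation above seems shortest.
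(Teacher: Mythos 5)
Your proof is correct and follows essentially the same route as the paper's: the forward direction reduces to the stated Facts by identifying $Y$ with the constant functions (you push forward along the evaluation $e_{x_0}$ and invoke Fact 2, whereas the paper pulls back along the constant-function embedding $y\mapsto f_y$ and invokes Fact 3 --- the same idea in dual form), and the converse direction is the paper's argument almost verbatim, covering $C(X,Y)$ by $W(A,\varepsilon)$-balls centered at preimages of a countable dense subset of the image of the restriction map into $Y^n$. Your cautionary remark about taking the dense set inside $\pi_A(C(X,Y))$ rather than in all of $Y^A$ is precisely the care the paper's proof also takes by working with the set $F=\{(f(x_1),\ldots,f(x_n)):f\in C(X,Y)\}$.
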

\begin{proof} $(1) \Rightarrow (2)$ \ We can consider $Y$ as a subspace of
$(C(X,Y),\mathbb U_p)$. [For every $y \in Y$ consider the mapping
$f_y: X \to Y$ defined by $f_y(x) = y$ for every $x \in X$. The
mapping $\varphi: y \mapsto f_y$ is a homeomorphism of $Y$ onto the
subspace $\varphi(Y)$ of $C(X,Y)$.] By Fact 3, $(Y,d)$ is
$\omega$-bounded, and by Fact 1 $(Y,d)$ must be separable.

\smallskip
$(2) \Rightarrow (1)$  Let $W(A,\varepsilon) \in \mathbb U_p$, where
$A = \{x_1, x_2, \ldots, x_n\} \subset X$ and $\varepsilon  >0$. We
want to find a countable family $\mathcal F \subset C(X,Y)$ such
that
\[
C(X,Y) \subset W(A,\varepsilon)[\mathcal F].
\]
Put $F = \{(f(x_1),f(x_2), \ldots, f(x_n)): f \in C(X,Y)\}$. Then $F
\subset Y^n$. The separability of $(Y,d)$ implies that also $F$ is a
separable subspace of $Y^n$. Let $\{(f_i(x_1),f_i(x_2), \ldots,
f_i(x_n)): i \in \mathbb N\}$ be a countable dense subset of $F$,
and let $\mathcal F= \{f_i: i \in \mathbb N\}$. It is easy to verify
that $C(X,Y) \subset W(A,\varepsilon)[\mathcal F]$. \, \, $\Box$
\end{proof}

\medskip
Recall that a topological space $X$ is said to be \emph{hemicompact}
if there is a sequence of compact subsets of $X$ such that every
compact subset of $X$ is contained in some set of the sequence.

\begin{theorem} \label{p-SHB}  Let $X$ be a pseudocompact space and $(Y,d)$ be a hemicompact
metric space. Then $(C(X,Y),\mathbb U_p)$ is  strictly Hurewicz
bounded.
\end{theorem}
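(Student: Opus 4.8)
The strategy is to prove the (formally stronger) statement that $(C(X,Y),\mathbb U_p)$ is $\sigma$-totally bounded; the conclusion then follows at once from Fact 4. Note that Proposition \ref{count.base} is not directly available, since $\mathbb U_p$ need not have a countable base when $X$ is uncountable.

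\emph{Step 1: reducing to maps with compact range.} For every $f\in C(X,Y)$ the image $f(X)$ is a continuous image of the pseudocompact space $X$, hence pseudocompact, and it is metrizable, being a subspace of $(Y,d)$; since a pseudocompact metrizable space is compact, $f(X)$ is compact. As $Y$ is hemicompact, fix a sequence $K_1\subseteq K_2\subseteq\cdots$ of compact subsets of $Y$ (we may take it increasing) such that every compact subset of $Y$ is contained in some $K_m$. Then for each $f\in C(X,Y)$ there is an $m$ with $f(X)\subseteq K_m$, so that, putting $C_m=\{f\in C(X,Y):f(X)\subseteq K_m\}$, we obtain $C(X,Y)=\bigcup_{m\in\mathbb N}C_m$.

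\emph{Step 2: each $C_m$ is totally bounded.} Equip $C_m$ with the subspace uniformity inherited from $(C(X,Y),\mathbb U_p)$; its basic entourages are the sets $W(A,\varepsilon)\cap C_m^2$ with $A\in\mathcal F(X)$ and $\varepsilon>0$. Given such $A=\{x_1,\dots,x_n\}$ and $\varepsilon>0$, consider the evaluation map $e\colon C_m\to K_m^{\,n}$, $e(f)=(f(x_1),\dots,f(x_n))$. Since $K_m$ is compact, $K_m^{\,n}$ is compact, hence totally bounded; choosing finitely many functions $f_1,\dots,f_k\in C_m$ whose $e$-images are $(\varepsilon/2)$-dense in $e(C_m)$ in the maximum metric on $K_m^{\,n}$ yields $C_m\subseteq\bigcup_{j\le k}\big(W(A,\varepsilon)\cap C_m^2\big)[f_j]$. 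Thus $C_m$ is totally bounded. (Equivalently: $C_m$ is a uniform subspace of the compact — hence totally bounded — product space $K_m^{X}$, and total boundedness passes to subspaces exactly as in Fact 3.)

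Combining the two steps, $(C(X,Y),\mathbb U_p)$ is a countable union of totally bounded subspaces, i.e. it is $\sigma$-totally bounded, and by Fact 4 it is strictly Hurewicz bounded. The only place where anything beyond routine verification is needed is Step 1, and there precisely the claim that $f(X)$ is \emph{compact} rather than merely bounded: this is where the pseudocompactness of $X$ enters, via the two standard facts that continuous images of pseudocompact spaces are pseudocompact and that pseudocompact metrizable spaces are compact. Everything else — the reduction via hemicompactness and the total boundedness of products of compacta — is straightforward.
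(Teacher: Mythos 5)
Your proof is correct and follows essentially the same route as the paper: use pseudocompactness of $X$ to show each $f(X)$ is compact, use hemicompactness of $Y$ to write $C(X,Y)$ as a countable union of sets of functions with range in a fixed compact $K_m$, observe each piece sits inside the totally bounded product $K_m^X$, and conclude via $\sigma$-total boundedness and Fact 4. You merely spell out the total-boundedness verification that the paper leaves implicit.
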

\begin{proof} Let $\{K_n: n \in \mathbb N\}$ be a sequence of compact sets
which is cofinal (with respect to the set inclusion) in the family
of compact sets in $Y$. For every $f \in C(X,Y)$, $f(X)$ is a
compact set in $Y$, thus there is $n \in \mathbb N$ such that $f(X)
\subset K_n$. Thus $(C(X,Y),\tau_p) \subset \bigcup_{n \in \mathbb
N} K_n^X$ is a subspace of a $\sigma$-compact space, i.e.
$(C(X,Y),\mathbb U_p)$ is strictly Hurewicz bounded. \, \, $\Box$
\end{proof}

\begin{corollary} Let $X$ be a pseudocompact space in which every compact set is
finite, and $(Y,d)$ be a hemicompact metric space. Then
$(C(X,Y),\mathbb U_k)$ is strictly Hurewicz bounded.
\end{corollary}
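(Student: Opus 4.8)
The plan is to reduce the statement to Theorem \ref{p-SHB} by observing that, under the hypothesis on $X$, the uniformity $\mathbb U_k$ of uniform convergence on compacta coincides with the uniformity $\mathbb U_p$ of pointwise convergence. First I would recall that $\mathbb U_k$ is generated by the entourages $W(K,\varepsilon)$ with $K \in \mathcal K(X)$ and $\varepsilon > 0$, while $\mathbb U_p$ is generated by the entourages $W(A,\varepsilon)$ with $A \in \mathcal F(X)$ and $\varepsilon > 0$. Since every finite subset of $X$ is compact, we always have $\mathbb U_p \subseteq \mathbb U_k$; and since, by hypothesis, every compact subset of $X$ is finite, every generator $W(K,\varepsilon)$ of $\mathbb U_k$ is in fact a generator of $\mathbb U_p$, giving $\mathbb U_k \subseteq \mathbb U_p$. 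Hence $\mathbb U_k = \mathbb U_p$.

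Next I would invoke Theorem \ref{p-SHB}: as $X$ is pseudocompact and $(Y,d)$ is a hemicompact metric space, $(C(X,Y),\mathbb U_p)$ is strictly Hurewicz bounded. Combining this with the identification $\mathbb U_k = \mathbb U_p$ established in the previous step yields that $(C(X,Y),\mathbb U_k)$ is strictly Hurewicz bounded, which is exactly the desired conclusion.

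There is essentially no serious obstacle here: the content of the corollary lies entirely in recognizing that the hypothesis ``every compact subset of $X$ is finite'' collapses the compact-open uniformity to the pointwise one, after which Theorem \ref{p-SHB} applies verbatim. The only point deserving a line of care is the inclusion $\mathbb U_k \subseteq \mathbb U_p$, where one should note that it suffices to check it on a generating family of entourages (condition (U3) then propagates it to all of $\mathbb U_k$), and that this is precisely where the hypothesis on $X$ is used. No use is made of pseudocompactness of $X$ beyond what Theorem \ref{p-SHB} already requires.
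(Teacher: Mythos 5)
Your argument is exactly the paper's: the hypothesis that every compact subset of $X$ is finite forces $\mathbb U_k = \mathbb U_p$, and then Theorem \ref{p-SHB} applies directly. Your extra care in checking the equality of uniformities on generating entourages is a correct (and slightly more detailed) rendering of the same one-line reduction.
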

\begin{proof} Since every compact set in $X$ is finite, we
have $(C(X,Y),\mathbb U_k) = (C(X,Y),\mathbb U_p)$.  Apply now the
previous theorem. \, \, $\Box$
 \end{proof}

\medskip
For the proof of the following proposition we use some ideas from
\cite[Example 2.6]{hernandez}.

\begin{proposition} \label{prop3} Let $(Y,d)$ be a non-bounded  $\sigma$-totally bounded metric
space. Then for every $n \in \mathbb N$, $Y^n$ is strictly {\sf
H}-bounded. However $Y^{\mathbb N}$ with the product uniformity is
not {\sf M}-bounded.
\end{proposition}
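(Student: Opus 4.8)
The plan is to split the statement into its two halves and treat them separately. For the first half, I would use the fact (stated as Fact~4 in the excerpt) that every $\sigma$-totally bounded uniform space is strictly {\sf H}-bounded, so it suffices to check that $Y^n$ is $\sigma$-totally bounded whenever $Y$ is. Write $Y = \bigcup_{k\in\naturals} T_k$ with each $T_k$ totally bounded in the metric uniformity of $Y$. Then $Y^n = \bigcup\{T_{k_1}\times\cdots\times T_{k_n} : (k_1,\dots,k_n)\in\naturals^n\}$ is a countable union, and each box $T_{k_1}\times\cdots\times T_{k_n}$ is totally bounded in the product uniformity of $Y^n$, since a finite product of totally bounded uniform spaces is totally bounded. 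Hence $Y^n$ is $\sigma$-totally bounded, and Fact~4 gives strict {\sf H}-boundedness. Note that non-boundedness of $Y$ is not needed here; it is only relevant to the second half.

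For the second half I would argue that $Y^{\naturals}$ with the product uniformity fails to be {\sf M}-bounded, following the idea from \cite[Example 2.6]{hernandez}. Since $(Y,d)$ is not bounded, fix a point $y_0\in Y$ and, for each $m\in\naturals$, a point $z_m\in Y$ with $d(y_0,z_m) > m$; also fix a $\sigma$-totally-bounded decomposition $Y=\bigcup_k T_k$. Now consider the sequence of entourages $(U_n:n\in\naturals)$ in the product uniformity, where $U_n$ controls the $n$-th coordinate only: $U_n = \{(f,g)\in Y^{\naturals}\times Y^{\naturals} : d(f(n),g(n)) < 1\}$. Given any sequence $(F_n:n\in\naturals)$ of finite subsets of $Y^{\naturals}$, I would build a point $h\in Y^{\naturals}$ avoiding every $U_n[F_n]$: for each $n$, the finite set $\{f(n): f\in F_n\}$ meets only finitely many of the (suitably chosen) totally bounded pieces, and using the unboundedness of $Y$ one can pick $h(n)\in Y$ at $d$-distance $\ge 1$ from every point of $\{f(n):f\in F_n\}$ — this is possible precisely because an unbounded metric space cannot be covered by finitely many balls of radius $1$, so the complement of $\bigcup_{f\in F_n} S(f(n),1)$ is nonempty. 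Then $h\notin U_n[F_n]$ for every $n$, so $Y^{\naturals}\neq\bigcup_n U_n[F_n]$, witnessing failure of {\sf M}-boundedness.

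The step I expect to require the most care is the last one: ensuring that for each $n$ one genuinely can choose $h(n)$ outside $\bigcup_{f\in F_n} S(f(n),1)$. If $Y$ itself happened to be covered by finitely many open $1$-balls (i.e. $Y$ had diameter-type boundedness with respect to the fixed radius $1$), this would fail, so I should rescale: instead of radius $1$, use entourages $U_n$ with a radius $\varepsilon_n>0$ small enough, or equivalently replace "distance $\ge 1$" by "distance $\ge \varepsilon$" for a fixed small $\varepsilon$ and invoke that a non-bounded metric space is not totally bounded, hence for some $\varepsilon>0$ it cannot be covered by finitely many $\varepsilon$-balls — then use that same $\varepsilon$ in all the $U_n$. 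With this uniform choice of $\varepsilon$, the diagonalization goes through coordinatewise and produces the desired $h$. The rest is routine bookkeeping about the product uniformity: every basic entourage of $Y^{\naturals}$ restricts finitely many coordinates, so the single-coordinate entourages $U_n$ are legitimate members of the product uniformity.
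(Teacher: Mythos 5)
Your proposal is correct and follows essentially the same route as the paper: Fact~4 plus $\sigma$-total boundedness of finite products for the first half (the paper merely asserts this step, which you rightly fill in), and for the second half the same coordinatewise diagonalization from \cite[Example 2.6]{hernandez}, picking in each coordinate a point of the unbounded space $Y$ outside the finitely many balls determined by $F_n$. Your closing worry about radius $1$ is moot --- an unbounded metric space can never be covered by finitely many balls of any fixed radius, since such a cover would bound the diameter by twice the radius plus the maximal distance between centers --- so no rescaling is needed, though your fallback via non-total-boundedness also works.
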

\begin{proof} Since $Y^n$, $n \in\mathbb N$, is $\sigma$-totally bounded, it is
strictly {\sf H}-bounded. We prove that $Y^{\mathbb N}$ with the
product uniformity $\mathbb U = \prod_{n\in\mathbb N} \mathbb V_n$
is not {\sf M}-bounded. (Here $\mathbb V_n$ is the (metric)
uniformity on $Y_n = Y$.) Let $\pi_n: Y^{\mathbb N} \to Y_n= Y$ be
the projection on the $n$-th coordinate space $Y= Y_n$. Let
$(U_n:n\in \mathbb N)$ be a sequence in $\mathbb U$; one can assume
that all $U_n$s are from the standard base for $\mathbb U$:
 $U_n = U_n(V_{n,1}, \dots, V_{n,n}) = \bigcap_{m\le n}(\pi_m\times
\pi_m)^{\gets}(V_{n,m})$, where $V_{n,m} \in \mathbb V_m \setminus
\{Y \times Y\}$ for $m\le n$. Let $(A_n:n\in\naturals)$ be an
arbitrary sequence of finite subsets of $Y^{\mathbb N}$. Since $Y$
is non-bounded, for each $n\in\mathbb N$, there is a point $y_n \in
Y_n \setminus (\pi_n\times \pi_n)(U_n)[\pi_n(A_n)]$. Then the point
$y= (y_n:n\in \mathbb N) \in Y^{\mathbb N} \setminus
\bigcup_{n\in\mathbb N}U_n[A_n]$.\, \, $\Box$
\end{proof}

\begin{theorem} \label{cp-m-bound} Let $X$ be a Tychonoff space and $(Y,d)$ be a non-bounded
hemicompact, arcwise connected  metric space. The following are
equivalent:
\begin{itemize}
\item[{\rm (1)}] $(C(X,Y),\mathbb U_p)$ is  strictly Hurewicz bounded;

\item[{\rm (2)}] $(C(X,Y),\mathbb U_p)$ is  strictly Menger bounded;

\item[{\rm (3)}] $(C(X,Y),\mathbb U_p)$ is  Hurewicz bounded;

\item[{\rm (4)}] $(C(X,Y),\mathbb U_p)$ is  Menger bounded;

\item[{\rm (5)}] $X$ is pseudocompact.
\end{itemize}
\end{theorem}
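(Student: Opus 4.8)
The implications $(1)\Rightarrow(2)\Rightarrow(4)$ and $(1)\Rightarrow(3)\Rightarrow(4)$ are immediate from the diagram, and the real content is the equivalence of these boundedness properties with pseudocompactness of $X$. The plan is therefore to prove $(5)\Rightarrow(1)$ and $(4)\Rightarrow(5)$, closing the cycle.

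For $(5)\Rightarrow(1)$, assume $X$ is pseudocompact. The key observation is that for every $f\in C(X,Y)$ the image $f(X)$ is pseudocompact in $Y$, and since $Y$ is metric, $f(X)$ is in fact compact (a pseudocompact metric space is compact). Because $Y$ is hemicompact, fix a sequence $(K_n)$ of compact sets cofinal in the compact subsets of $Y$; then every $f\in C(X,Y)$ satisfies $f(X)\subset K_n$ for some $n$, so $C(X,Y)\subset\bigcup_n K_n^X$. Each $K_n^X$ with the product (= pointwise) uniformity is totally bounded (a product of copies of the totally bounded space $K_n$), hence $(C(X,Y),\mathbb U_p)$ is $\sigma$-totally bounded, and by Fact 4 it is strictly Hurewicz bounded. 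This is essentially the argument of Theorem \ref{p-SHB}, so only a minor adaptation is needed.

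The substantive direction is $(4)\Rightarrow(5)$: if $X$ is \emph{not} pseudocompact, I must show $(C(X,Y),\mathbb U_p)$ fails to be Menger bounded. Since $X$ is not pseudocompact, there is an unbounded $h\in C(X)$; equivalently there is a sequence $(x_k)$ of points of $X$ and a locally finite (in $X$) family of open sets, or more concretely a discrete-in-$X$ sequence $(x_k)$ together with continuous ``bump'' functions one can combine. Using arcwise connectedness of $Y$ and its non-boundedness, fix an arc, i.e. a path $\gamma:[0,\infty)\to Y$ (pieced together from arcs) with $d(\gamma(0),\gamma(t))\to\infty$. The idea is to transport the non-boundedness of $X$ into $C(X,Y)$ along $\gamma$: for a prescribed sequence $(W(A_n,\varepsilon_n))$ of basic entourages, with $A_n=\{x^{(n)}_1,\dots,x^{(n)}_{m_n}\}$ finite, and any sequence $(\mathcal F_n)$ of finite subsets of $C(X,Y)$, one builds a single $f\in C(X,Y)$ that escapes every $W(A_n,\varepsilon_n)[\mathcal F_n]$. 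Concretely, the values of the finitely many functions in $\mathcal F_n$ on the finite set $A_n$ form a finite subset of $Y^{m_n}$; choose a point $x_n\in X$ (outside $A_1\cup\dots\cup A_n$, possible since $X$ is not pseudocompact one can find an infinite closed discrete subset or use the unbounded function to separate), and arrange $f(x_n)=\gamma(t_n)$ with $t_n$ large enough that $\gamma(t_n)$ is $\varepsilon_n$-far from every coordinate of every point of $\mathcal F_n$'s image on $A_n$ — using that $Y$ is non-bounded so such $\gamma(t_n)$ exists. Continuity of $f$ is secured by the arcwise connectedness (interpolate along $\gamma$) together with a locally finite tubular construction around the $x_n$. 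Then $f\in W(A_n,\varepsilon_n)[\mathcal F_n]$ would force some $g\in\mathcal F_n$ with $d(f,g)<\varepsilon_n$ on $A_n$, but $f$ was chosen so no such $g$ exists at the witness — wait, the witness $x_n$ is not in $A_n$, so the obstruction must be arranged differently: instead one should put the witness \emph{inside} the $n$-th test set. The correct setup is: since $X$ is not pseudocompact, there is a countable subset $\{z_1,z_2,\dots\}\subset X$ that is discrete and $C$-embedded (standard: non-pseudocompact Tychonoff spaces contain such sets, via an unbounded function); take the basic entourages to be arbitrary and incorporate $z_n$ into the analysis by diagonalizing, producing $f$ with $f(z_n)$ chosen after seeing $A_n,\mathcal F_n$ to lie far from all relevant images, which is legitimate because $C$-embeddedness lets us prescribe $f$ on $\{z_n\}$ freely (as a bounded-on-each-point but globally unbounded function) and then extend continuously.

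The main obstacle is precisely this last construction: verifying that the diagonalized function $f$ is genuinely continuous on all of $X$, not merely on the discrete witness set. This is where arcwise connectedness, hemicompactness, and $C$-embeddedness of the witness sequence must be combined carefully — one wants $f$ to equal the constant $\gamma(0)$ off a locally finite family of neighborhoods of the $z_n$, on each of which $f$ follows a path in $Y$ to the prescribed far-away value. I expect the proof to fix a locally finite open cover refinement around the closed discrete set $\{z_n\}$, use Tietze/Urysohn on $X$ to get scalar ``coordinate'' functions, and compose with the arc $\gamma$ to land in $Y$; checking that this $f$ lies outside $\bigcup_n W(A_n,\varepsilon_n)[\mathcal F_n]$ is then a finite computation per $n$ using non-boundedness of $Y$. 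The references to \cite{hernandez} and Proposition \ref{prop3} suggest the authors have in mind exactly this ``non-boundedness obstructs Menger boundedness'' mechanism, now relativized from $Y^{\mathbb N}$ to $C(X,Y)$ via a non-pseudocompact $X$.
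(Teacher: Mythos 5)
Your reduction to the two implications $(5)\Rightarrow(1)$ and $(4)\Rightarrow(5)$ matches the paper, and your $(5)\Rightarrow(1)$ is exactly the paper's Theorem \ref{p-SHB} argument. But the substantive direction $(4)\Rightarrow(5)$ is not actually proved in your write-up: you end by naming ``the main obstacle'' (continuity of the diagonalized $f$) without resolving it, and along the way the quantifier structure goes astray. To refute Menger boundedness you must \emph{choose} a specific sequence of entourages and then defeat every response $(\mathcal F_n)$; at one point you claim to build an escaping $f$ for a ``prescribed'' (i.e.\ arbitrary) sequence $(W(A_n,\varepsilon_n))$, which is false in general (take all $A_n$ equal to a fixed singleton and $Y$ separable), and you never settle on which entourages the adversary should play --- presumably something like $W(\{z_1,\dots,z_n\},1)$, but this is never stated. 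The attempt to place the witness point inside the test set and then ``diagonalize'' is the right instinct, but as written it is a plan, not an argument.

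The paper avoids the direct construction entirely. Since $X$ is not pseudocompact, it has a sequence $(O_n)$ of open sets whose closures form a discrete family; picking $x_n\in O_n$ gives $H=\{x_n:n\in\mathbb N\}$, and the restriction map $\pi:(C(X,Y),\mathbb U_p)\to(C(H,Y),\mathbb U_p)$ is uniformly continuous and --- this is where arcwise connectedness is used --- \emph{surjective}: any $f:H\to Y$ extends to $X$ by choosing $y\in Y$ with $y\ne f(x_n)$ for all $n$, an arc $C_n$ from $y$ to $f(x_n)$, and a Urysohn-type map $\overline{O_n}\to C_n$ that is $y$ on the boundary and $f(x_n)$ at $x_n$, glued via discreteness of $\{\overline{O_n}\}$. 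Then $C(H,Y)=Y^{\mathbb N}$ with the product uniformity, Fact 2 would transfer Menger boundedness to $Y^{\mathbb N}$, and Proposition \ref{prop3} (whose proof is the clean diagonalization, with no continuity to check) says $Y^{\mathbb N}$ is not Menger bounded when $Y$ is non-bounded. You gesture at exactly this mechanism in your final sentence, but the factorization through $\pi$ --- which converts your ``main obstacle'' into the already-proved Proposition \ref{prop3} --- is the missing idea.
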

\begin{proof} Only $(4) \Rightarrow (5)$ need a proof. Suppose that $X$ is
not pseudocompact. There is a sequence $\{O_n: n \in \mathbb N\}$ of
open sets such that the family $\{\overline{O_n}: n \in \mathbb N\}$
is discrete. Choose for every $n \in \mathbb N$, $x_n \in O_n$ and
put
\[
H = \{x_n: n \in \mathbb N\}.
\]
It is easy to verify that every function $f: H \to Y$ can be
continuously extended to a continuous function $f^*: X \to Y$.
(There is $y \in Y$ such that $y \ne f(x_n)$ for every $n \in
\mathbb N$. Let $n \in \mathbb N$ and let $C_n$ be an arc containing
$f(x_n)$ and $y$. There is a continuous function $f_n:
\overline{O_n} \to C_n$ such that
\[
f_n(x) = \left\{
                \begin{array}{ll} y, & \hbox{for every $x \in \overline{O_n} \setminus
O_n$,}\\
f(x_n),  & \hbox{for $x = x_n$.}
\end{array}
\right.
\]
Define the function $f^*: X \to Y$ as follows:
\[
f^*(x) = \left\{
                \begin{array}{ll} f_n(x), & \hbox{if $x \in \overline{O_n}$, $n \in \mathbb
                N$,}\\
                y, & \hbox{otherwise.}
                \end{array}
                \right.
\]
Of course, $f^{*}$ is continuous. The mapping $\pi: (C(X,Y),\mathbb
U_p) \to (C(H,Y),\mathbb U_p)$ defined by $\pi(f) = f\restriction H$
is uniformly continuous and onto. Thus $(C(H,Y),\mathbb U_p)$ is
Menger bounded. However $C(H,Y) = Y^{\mathbb N}$, and $Y^{\mathbb
N}$ with the product uniformity is not Menger bounded by Proposition
\ref{prop3}. \, \, $\Box$
\end{proof}

\begin{theorem} Let $X$ be a Tychonoff space and $(Y,d)$ be a separable metric
space. If every compact set in $X$ is metrizable, then
$(C(X,Y),\mathbb U_k)$ is $\omega$-bounded;
\end{theorem}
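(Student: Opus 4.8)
The plan is to reduce the statement to Theorem~\ref{cp-omega-bound} by showing that, under the hypothesis that every compact subset of $X$ is metrizable, the uniformity $\mathbb U_k$ on $C(X,Y)$ is ``close enough'' to $\mathbb U_p$ from the point of view of $\omega$-boundedness. More precisely, I would try to cover $C(X,Y)$ by countably many subspaces each of which is totally bounded, or at least $\omega$-bounded, for the compact-open uniformity; combining these with Fact~1 and Fact~3 (and the observation that a countable union of $\omega$-bounded subspaces of a uniform space is $\omega$-bounded) would finish the proof.

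First I would recall that, since $(Y,d)$ is separable, it has a countable dense set $D$, and that for a compact metric space $K$ the function space $C(K,Y)$ with the sup-metric is separable: indeed $K$ is second countable, so it has a countable base, and one can produce a countable dense family of functions into $Y$ (for instance using a countable dense subalgebra-type argument, or simply the classical fact that $C(K,Y)$ is separable whenever $K$ is a compact metric space and $Y$ is separable metric). Hence $(C(K,Y),\mathbb U_{\sup})$ is $\omega$-bounded by Fact~1. Next, fix a basic entourage $W(K,\varepsilon)\in\mathbb U_k$ with $K\in\mathcal K(X)$ and $\varepsilon>0$. The restriction map $r_K:(C(X,Y),\mathbb U_k)\to (C(K,Y),\mathbb U_{\sup})$, $f\mapsto f\restriction K$, is uniformly continuous; by the separability just noted there is a countable set $\{g_i:i\in\mathbb N\}\subset C(K,Y)$ that is $\varepsilon$-dense in $C(K,Y)$. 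For each $i$ pick, if possible, some $f_i\in C(X,Y)$ with $f_i\restriction K$ within $\varepsilon$ of $g_i$; then for an arbitrary $f\in C(X,Y)$ the function $f\restriction K$ is within $\varepsilon$ of some $g_i$, hence within $2\varepsilon$ of $f_i\restriction K$, so $f\in W(K,2\varepsilon)[\{f_i:i\in\mathbb N\}]$. Since $\varepsilon>0$ and $K$ were arbitrary, this shows $(C(X,Y),\mathbb U_k)$ is $\omega$-bounded.

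The cleanest way to organize the argument is probably: (i) note $\mathbb U_k$ has, as a subbase, the sets $W(K,\varepsilon)$ for $K$ ranging over compact subsets of $X$, and that compact subsets of $X$ are metrizable by hypothesis; (ii) show $(C(K,Y),\mathbb U_{\sup})$ is separable hence $\omega$-bounded; (iii) transfer a countable $\varepsilon$-net through the uniformly continuous restriction map $r_K$ as above, taking care of the ``within $2\varepsilon$'' slack which is harmless since $\varepsilon$ is arbitrary. One may even shortcut (iii) by invoking Fact~2 directly: the $\omega$-bounded space $(C(K,Y),\mathbb U_{\sup})$ need not be a uniformly continuous image of $C(X,Y)$, so Fact~2 does not quite apply, which is why the explicit net-transfer is the right tool.

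The main obstacle I expect is step (ii): establishing that $C(K,Y)$ is separable for $K$ a compact metric space and $Y$ a separable metric space needs to be stated carefully. The standard proof uses that $K$ embeds in the Hilbert cube, or constructs finitely many pieces adapted to a finite $\delta$-net of $K$ together with a countable dense set of $Y$-values; either way it is a known result but should be cited or sketched rather than assumed silently. A secondary, minor point is the bookkeeping with $\varepsilon$ versus $2\varepsilon$ in the net-transfer, and the observation that it suffices to handle subbasic entourages $W(K,\varepsilon)$ rather than general entourages of $\mathbb U_k$ — but since $\omega$-boundedness is tested on a base (or subbase by intersecting finitely many countable nets) this is routine. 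Everything else is straightforward and parallels the proof of Theorem~\ref{cp-omega-bound}.
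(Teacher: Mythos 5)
Your proof is correct and is essentially the paper's own argument: both fix a basic entourage $W(K,\varepsilon)$, use the separability of $(C(K,Y),\tau_u)$ for $K$ compact metrizable and $Y$ separable metric, and pull a countable dense (respectively $\varepsilon$-dense) family of restrictions back to a countable set $\mathcal F\subset C(X,Y)$ with $C(X,Y)\subset W(K,\varepsilon)[\mathcal F]$. The only cosmetic difference is that the paper takes a countable dense subset of the image $\{f\restriction K: f\in C(X,Y)\}$ directly, which avoids your $2\varepsilon$ bookkeeping.
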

\begin{proof} Let $W(K,\varepsilon) \in \mathbb U_k$, where $K$ is a compact subspace of $X$ and
$\varepsilon  >0$. We will find a countable family $\mathcal F
\subset C(X,Y)$ such that
\[
C(X,Y) \subset W(K,\varepsilon)[\mathcal F].
\]
Consider the set $F = \{f \restriction K: f \in C(X,Y)\} \subset
C(K,Y)$. The separability and metrizability of $(C(K,Y),\tau_u)$
implies that $F$ is a separable subspace of $(C(K,Y),\tau_u)$. Let
$\{f_i \restriction K: i \in \mathbb N\}$ be a countable dense
subset of $F$, and let
\[
\mathcal F= \{f_i: i \in \mathbb N\}.
\]
Then one can easily verify that $C(X,Y) \subset
W(K,\varepsilon)[\mathcal F]$. \, \, $\Box$
\end{proof}

\medskip
\noindent {\bf Note.}  If $Y = \mathbb R^n$, $n \in \mathbb N$, then
$\omega$-boundedness of $(C(X,Y),\mathbb U_k)$ implies that every
compact set in $X$ is metrizable (see \cite{hola-koc}).

\begin{theorem} Let $X$ be a hemicompact space, and $(Y,d)$ a metric
space. The following are equivalent:
\begin{itemize}
\item[{\rm (1)}] $(C(X,Y),\mathbb U_k)$ is strictly {\sf H}-bounded;

\item[{\rm (2)}]  $(C(X,Y),\mathbb U_k)$ is strictly {\sf M}-bounded;

\item[{\rm (3)}] $(C(X,Y),\mathbb U_k)$ is {\sf H}-bounded;

\item[{\rm (4)}]  $(C(X,Y),\mathbb U_k)$ is $\sigma$-totally bounded.
\end{itemize}
\end{theorem}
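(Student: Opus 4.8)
The plan is to reduce everything to Proposition \ref{count.base} by showing that hemicompactness of $X$ forces the uniformity $\mathbb U_k$ on $C(X,Y)$ to have a countable base. First I would invoke hemicompactness of $X$ to fix a sequence $\{K_n : n \in \mathbb N\}$ of compact subsets of $X$ that is cofinal, with respect to set inclusion, in the family $\mathcal K(X)$ of all compact subsets of $X$; replacing $K_n$ by $K_1 \cup \cdots \cup K_n$ if necessary, I may also assume $K_1 \subseteq K_2 \subseteq \cdots$.

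Next I would check that $\mathbb B = \{ W(K_n, 1/m) : n, m \in \mathbb N\}$ is a countable base for $\mathbb U_k$. Given an arbitrary generating entourage $W(K,\varepsilon)$ with $K \in \mathcal K(X)$ and $\varepsilon > 0$, choose $n \in \mathbb N$ with $K \subseteq K_n$ (possible by cofinality) and $m \in \mathbb N$ with $1/m \le \varepsilon$. Then $W(K_n, 1/m) \subseteq W(K,\varepsilon)$, since whenever $d(f(x),g(x)) < 1/m$ for all $x \in K_n$ one has in particular $d(f(x),g(x)) < 1/m \le \varepsilon$ for all $x \in K \subseteq K_n$. Hence $\mathbb B$ is a countable base for $\mathbb U_k$, so $(C(X,Y),\mathbb U_k)$ is a uniform space with a countable base.

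Finally, Proposition \ref{count.base} applied to $(Z,\mathbb U) = (C(X,Y),\mathbb U_k)$ yields at once the equivalence of statements (1), (2), (3) and (4), which completes the proof. I do not anticipate a genuine obstacle here: the entire content is the countable-base observation, and the remainder is a direct citation of Proposition \ref{count.base}. The only point requiring a moment's care is that cofinality of $\{K_n\}$ in $\mathcal K(X)$ is precisely what permits replacing an arbitrary compact $K$ by some $K_n \supseteq K$ when comparing entourages.
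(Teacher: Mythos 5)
Your proof is correct and takes essentially the same route as the paper, which simply cites \cite{MN} for the fact that hemicompactness of $X$ makes $(C(X,Y),\mathbb U_k)$ metrizable (i.e.\ gives the countable base you construct explicitly) and then applies Proposition \ref{count.base}. Your version just supplies the direct verification of the countable base instead of the citation.
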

\begin{proof} It is known \cite{MN} that hemicompactness of $X$
implies that the space $(C(X,Y),\mathbb U_k)$ is metrizable. Then
apply Proposition \ref{count.base}. \, \, $\Box$
 \end{proof}

\medskip
Similarly, applying Proposition \ref{prop2} we have the following
result.

\begin{theorem} Let $X$ be a hemicompact space, and $(Y,d)$ a metric
space. The following are equivalent:
\begin{itemize}
\item[{\rm  (1)}] $\mathcal F \subset (C(X,Y),\mathbb U_k)$ is strictly {\sf R}-bounded;

\item[{\rm (2)}] $\mathcal F$ is countable.
\end{itemize}
\end{theorem}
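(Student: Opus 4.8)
The plan is to reduce this theorem to Proposition~\ref{prop2} exactly as the preceding theorem reduced its statement to Proposition~\ref{count.base}. First I would invoke the fact (cited from \cite{MN}) that hemicompactness of $X$ forces $(C(X,Y),\mathbb U_k)$ to have a countable base: if $\{K_n : n \in \mathbb N\}$ is a cofinal sequence of compact subsets of $X$, then the entourages $W(K_n,1/m)$ with $n,m \in \mathbb N$ form a countable base for $\mathbb U_k$. Since $(Y,d)$ is a metric space, $d$ is a genuine metric (not a pseudometric), so $C(X,Y)$ with the uniformity $\mathbb U_k$ is Hausdorff: if $f \ne g$ then $f(x) \ne g(x)$ for some $x$, which lies in some $K_n$, and then $(f,g) \notin W(K_n,\varepsilon)$ for small $\varepsilon$. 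Hence any subspace $\mathcal F \subset (C(X,Y),\mathbb U_k)$ is itself a Hausdorff uniform space with a countable base.

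Next I would apply Fact~3: a subspace of a uniform space inherits all the relevant boundedness properties, and conversely here we are simply working inside the subspace $\mathcal F$ with its relativized uniformity $\mathbb U_k \restriction \mathcal F$, which again is Hausdorff with a countable base. Now Proposition~\ref{prop2} applies verbatim to $(\mathcal F, \mathbb U_k \restriction \mathcal F)$: it is strictly {\sf R}-bounded if and only if $\mathcal F$ is countable. That gives the equivalence (1)$\Leftrightarrow$(2) immediately.

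The only genuine content beyond Proposition~\ref{prop2} is therefore the two structural observations above — that $\mathbb U_k$ is metrizable (countable base) when $X$ is hemicompact, and that it is Hausdorff when $(Y,d)$ is a metric space — and both are routine. I do not anticipate a real obstacle; the mild care needed is just to make sure the relativized uniformity on an arbitrary subspace $\mathcal F$ still satisfies the hypotheses of Proposition~\ref{prop2}, which it does since both "Hausdorff" and "countable base" pass to subspaces. One small point worth stating explicitly: strict {\sf R}-boundedness is a property of the uniform space $(\mathcal F, \mathbb U_k\restriction \mathcal F)$, so the statement "$\mathcal F \subset (C(X,Y),\mathbb U_k)$ is strictly {\sf R}-bounded" should be read with that relativized uniformity, after which the argument is a one-line appeal to Proposition~\ref{prop2}.

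\begin{proof}
By \cite{MN}, hemicompactness of $X$ implies that $(C(X,Y),\mathbb U_k)$ is metrizable; in particular it has a countable base, and so does the relativized uniformity on any subspace $\mathcal F$. Since $(Y,d)$ is a metric space, $(C(X,Y),\mathbb U_k)$ is Hausdorff, hence so is $\mathcal F$ with the relativized uniformity. Thus $(\mathcal F, \mathbb U_k \restriction \mathcal F)$ is a Hausdorff uniform space with a countable base, and Proposition \ref{prop2} yields that it is strictly {\sf R}-bounded if and only if $\mathcal F$ is countable. \, \, $\Box$
\end{proof}
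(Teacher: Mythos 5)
Your proposal is correct and matches the paper's intent exactly: the paper gives no explicit proof, only the remark that the result follows "similarly" from Proposition \ref{prop2} together with the metrizability of $(C(X,Y),\mathbb U_k)$ for hemicompact $X$. You have simply written out the details the paper leaves implicit (countable base, Hausdorffness, passage to the relativized uniformity on $\mathcal F$), all of which are the intended steps.
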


\subsection{$(C(X,Y),\mathbb U_u)$ and $(C(X,Y),\mathbb U_m)$}

We are going now to investigate function spaces with uniformities
$\mathbb U_u$ and $\mathbb U_m$.

\begin{theorem} Let $X$ be a Tychonoff space, $(Y,d)$ be a metric space. The
following are equivalent:
\begin{itemize}
\item[{\rm (1)}]  $(C(X,Y),\mathbb U_m)$ is $\omega$-bounded;

\item[{\rm (2)}]  $(C(X,Y),\mathbb U_u)$ is $\omega$-bounded;

\item[{\rm (3)}]  $(C(X,Y),\tau_u)$ is separable;

\item[{\rm (4)}] $X$ is compact metrizable and $(Y,d)$ is separable.
\end{itemize}
\end{theorem}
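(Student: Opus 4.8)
The plan is to prove the cycle $(4)\Rightarrow(1)\Rightarrow(2)\Rightarrow(3)\Rightarrow(4)$. The implication $(1)\Rightarrow(2)$ is immediate from the diagram in the Preliminary results section, since $\mathbb{U}_m$ is finer than $\mathbb{U}_u$ and $\omega$-boundedness passes from a finer uniformity to a coarser one on the same underlying set. For $(2)\Rightarrow(3)$ I would use Fact 1: $(C(X,Y),\mathbb{U}_u)$ is $\omega$-bounded iff it is, as a metric (or pseudometric, since $d$ gives the sup-metric $\bar d(f,g)=\sup_x d(f(x),g(x))$, possibly $+\infty$) space, separable in the topology $\tau_u$; this gives $(2)\Leftrightarrow(3)$ directly once one checks $(C(X,Y),\tau_u)$ is metrizable-like enough for Fact 1, which it is since $\tau_u$ is the topology of a (possibly infinite-valued) metric and $\omega$-boundedness only involves $\varepsilon$-balls.

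The substance is $(3)\Rightarrow(4)$. Assume $(C(X,Y),\tau_u)$ is separable. First I would show $(Y,d)$ is separable: as in Theorem~\ref{cp-omega-bound}, embed $Y$ into $C(X,Y)$ via constant functions $y\mapsto f_y$; this is a homeomorphic (indeed uniform) embedding onto a subspace, and a subspace of a separable metric space is separable, so $(Y,d)$ is separable. Next, and this is where the real work lies, I must show $X$ is compact and metrizable. For metrizability of $X$: pick any two distinct points $y_0,y_1\in Y$ with $d(y_0,y_1)=:\delta>0$ (if $Y$ is a single point the statement degenerates and $X$ may be taken trivially; handle that aside). Then the map sending a subset-indicator-type continuous function to $X$ should let me transfer a countable $\tau_u$-dense family to a countable family that separates points of $X$ and generates its topology; more precisely, for each $f$ in a countable $\tau_u$-dense set $\mathcal{D}$ and each $n$, the sets $f^{-1}(S(f(x),1/n))$ type neighborhoods, or the cozero sets of $x\mapsto d(y_0,f(x))$, form a countable family; I expect to show this family is a base for $X$, using that the zero-sets of continuous $Y$-valued functions separate points and closed sets in a Tychonoff space, and that $\tau_u$-approximability by $\mathcal{D}$ forces countably many of these to suffice. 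Hence $X$ has a countable base, so $X$ is metrizable (being Tychonoff).

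For compactness of $X$: once $X$ is metrizable, it embeds in $C(X,Y)$'s behaviour via, say, a bounded continuous metric, and if $X$ were not compact it would contain either a closed copy of $\mathbb{N}$ (discrete) or fail to be pseudocompact, giving an unbounded continuous real-valued — hence, composing into an arc or just using a non-bounded direction if $Y$ permits, or using that $(Y,d)$ separable metric with more than one point — family $\{g_t:t\in[0,1]\}$ or a sequence $g_n$ with $\bar d(g_n,g_m)\ge\delta/2$ for $n\ne m$, contradicting separability of $(C(X,Y),\tau_u)$. The cleanest route: a separable metric space cannot contain an uncountable $\delta$-separated set, so $C(X,Y)$ in the sup-metric has no uncountable uniformly-separated subset; but if $X$ is metrizable non-compact, it has an infinite closed discrete subspace $\{x_n\}$ admitting a continuous Urysohn-type function hitting $y_0$ and $y_1$ independently on the $x_n$, producing a $2^{\mathbb N}$-sized $\delta$-separated family in $(C(X,Y),\tau_u)$, a contradiction. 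Conversely $(4)\Rightarrow(1)$: if $X$ is compact metrizable and $Y$ separable, then $(C(X,Y),\tau_u)=(C(X,Y),\tau_k)$ is separable metric, hence $\omega$-bounded by Fact 1, and since $\mathbb{U}_m=\mathbb{U}_u=\mathbb{U}_k$ when $X$ is compact, $(C(X,Y),\mathbb{U}_m)$ is $\omega$-bounded. The main obstacle I anticipate is the careful bookkeeping in $(3)\Rightarrow(4)$ showing simultaneously that $X$ must be both compact and second countable purely from separability of the sup-uniformity — in particular ruling out pseudocompact-but-non-compact and non-metrizable $X$ — which requires producing large uniformly-separated families of continuous functions, and that construction needs $Y$ to have at least two points at positive distance and may need an arc or at least path-like room in $Y$ depending on how much control over extensions one needs; I would first try to get away with only two points of $Y$ and Urysohn functions into the two-point subspace viewed inside $Y$.
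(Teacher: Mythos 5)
Your routing of the easy implications coincides with the paper's: $(1)\Rightarrow(2)$ because $\mathbb U_m$ refines $\mathbb U_u$, $(2)\Leftrightarrow(3)$ from Fact 1 together with the metrizability of $(C(X,Y),\mathbb U_u)$, and $(4)\Rightarrow(1)$ from the coincidence of $\mathbb U_m$ and $\mathbb U_u$ when $X$ is compact. The paper, however, simply declares $(3)\Leftrightarrow(4)$ to be ``a well known fact'' and offers no argument, whereas you try to prove $(3)\Rightarrow(4)$ from scratch; that is where your proposal breaks down.

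The gap is concrete. Your argument for metrizability of $X$ is only a declaration of intent (``should let me transfer\ldots'', ``I expect to show\ldots''), and your argument for compactness hinges on manufacturing a large uniformly $\delta$-separated family of continuous functions $X\to Y$ that take prescribed values $y_0$ or $y_1$ independently at the points of an infinite closed discrete set. A Urysohn function lands in $[0,1]$; to push it into $Y$ you need a continuous path (indeed an arc) in $Y$ from $y_0$ to $y_1$, and no such structure is available for an arbitrary metric space $Y$. This is not a bookkeeping issue you can defer: without some hypothesis of this kind the implication $(3)\Rightarrow(4)$ is simply false. Take $X=\reals$ and $Y=\{0,1\}$ with the discrete metric; then $C(X,Y)$ consists of the two constant functions, so $(C(X,Y),\tau_u)$ is separable and $(C(X,Y),\mathbb U_m)=(C(X,Y),\mathbb U_u)$ is finite, hence $\omega$-bounded, yet $X$ is not compact. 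So $(1)$--$(3)$ hold while $(4)$ fails. The classical equivalence ``$(C(X),\tau_u)$ separable iff $X$ compact metrizable'' that the paper is invoking requires $Y=\reals$, or at least an arcwise connected (non-degenerate) $Y$ of the kind assumed in the paper's Theorems \ref{cp-m-bound} and \ref{Y-Frechet}; under such an assumption your separated-family strategy can be carried out, but as written your proof (and, for that matter, the theorem in this generality) does not go through.
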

\begin{proof} $(1) \Rightarrow (2)$ is trivial, and
$(2) \Leftrightarrow (3)$ follows from Fact 1 since $(C(X,Y),\mathbb
U_u)$ is metrizable. (3) $\Leftrightarrow$ (4) is well known fact,
while (4) $\Rightarrow$ (1) follows from the fact that compactness
of $X$ implies that the uniformities $\mathbb U_m$ and $\mathbb U_u$
coincide. \, \, $\Box$
\end{proof}

\medskip
The following theorems are consequences of Propositions
\ref{count.base} and \ref{prop2} and the fact that $(C(X,Y),\mathbb
U_u)$ is metrizable.

\begin{theorem} \label{thm-shb} Let $X$ be a Tychonoff space and $(Y,d)$ a metric space. The
following are equivalent:
\begin{itemize}
\item[{\rm (1)}]  $(C(X,Y),\mathbb U_u)$ is strictly Hurewicz bounded;

\item[{\rm (2)}]  $(C(X,Y),\mathbb U_u)$ is strictly Menger bounded;

\item[{\rm (3)}]  $(C(X,Y),\mathbb U_u)$ is  Hurewicz bounded;

\item[{\rm (4)}] $(C(X,Y),\mathbb U_u)$ is $\sigma$-totally bounded.
\end{itemize}
\end{theorem}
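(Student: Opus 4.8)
The plan is to reduce everything to Proposition~\ref{count.base}. The key point is the well-known fact, already invoked in the excerpt for the hemicompact case, that $(C(X,Y),\mathbb U_u)$ is metrizable: the sets $B_{1/n}$, $n\in\mathbb N$, form a countable base for $\mathbb U_u$, since for any $\varepsilon>0$ one has $B_{1/n}\subset B_\varepsilon$ whenever $1/n<\varepsilon$. Thus $(C(X,Y),\mathbb U_u)$ is a uniform space with a countable base, and Proposition~\ref{count.base} applies verbatim.

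First I would observe that $B_{1/n}$ is indeed an entourage of $\mathbb U_u$ and that $\{B_{1/n}:n\in\mathbb N\}$ is a base: given any generating set $B_\varepsilon$, pick $n$ with $1/n<\varepsilon$, and then $(f,g)\in B_{1/n}$ forces $d(f(x),g(x))<1/n<\varepsilon$ for all $x\in X$, so $B_{1/n}\subset B_\varepsilon$. Hence $\mathbb U_u$ has a countable base. Then the equivalence of (1), (2) and (4) is an immediate application of Proposition~\ref{count.base} with $(Z,\mathbb U)=(C(X,Y),\mathbb U_u)$, reading off the implications ${\rm (1)}\Leftrightarrow{\rm (2)}$ and ${\rm (1)}\Leftrightarrow{\rm (4)}$ from that proposition (where the labels there (1),(2),(3),(4) correspond here to our (1),(2),(3),(4) in the same order). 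In particular (3)~$\Rightarrow$~(4) and the trivial implications (1)~$\Rightarrow$~(3), (1)~$\Rightarrow$~(2) close the cycle, so all four statements are equivalent.

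I do not anticipate any genuine obstacle here: the entire content is the metrizability observation, and the combinatorial work has already been done in Proposition~\ref{count.base}. The only mild subtlety worth a sentence is making sure the countable base can be taken decreasing, which it already is ($B_{1/1}\supset B_{1/2}\supset\cdots$), matching the hypothesis used inside the proof of Proposition~\ref{count.base}. So the proof is essentially one line once the base is identified.

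\begin{proof}
Since for every $\varepsilon>0$ there is $n\in\mathbb N$ with $B_{1/n}\subset B_\varepsilon$, the decreasing sequence $(B_{1/n}:n\in\mathbb N)$ is a countable base for $\mathbb U_u$; hence $(C(X,Y),\mathbb U_u)$ is a uniform space with a countable base. The equivalences (1) $\Leftrightarrow$ (2) $\Leftrightarrow$ (3) $\Leftrightarrow$ (4) now follow directly from Proposition \ref{count.base}. \, \, $\Box$
\end{proof}
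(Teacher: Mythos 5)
Your proof is correct and matches the paper's own argument: the paper states that this theorem is a consequence of Proposition \ref{count.base} together with the metrizability (equivalently, countable base) of $(C(X,Y),\mathbb U_u)$, which is precisely what you verify via the base $\{B_{1/n}: n\in\mathbb N\}$. Nothing further is needed.
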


\begin{theorem} Let $X$ be a Tychonoff space and $(Y,d)$ a metric
space. For $\mathcal F \subset (C(X,Y ), \mathbb  U_u)$ the
following are equivalent:
\begin{itemize}
\item[{\rm (1)}] $(\mathcal F, \mathbb U_u\restriction \mathcal F)$ is strictly {\sf R}-bounded;

\item[{\rm (2)}]  $\mathcal F$ is countable.
\end{itemize}
\end{theorem}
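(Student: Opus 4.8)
The plan is to derive this from Proposition~\ref{prop2} together with the fact that the uniformity of uniform convergence is metrizable, arguing that the hypotheses of Proposition~\ref{prop2} are satisfied by the subspace $(\mathcal F,\mathbb U_u\restriction\mathcal F)$.

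First, I would observe that $(2)\Rightarrow(1)$ is the easy direction: a countable set $\mathcal F=\{f_n:n\in\mathbb N\}$ is strictly $\mathsf R$-bounded (in fact, in any uniform space) because TWO, when ONE plays $U_n$ in the $n$-th round, responds with the point $f_n$; then $\mathcal F=\bigcup_{n\in\mathbb N}U_n[f_n]$ since each $f_n\in U_n[f_n]$. (Alternatively, this follows formally from the implications in the diagram and the fact that countability is obviously preserved.) The substance is $(1)\Rightarrow(2)$.

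For $(1)\Rightarrow(2)$: recall that $(C(X,Y),\mathbb U_u)$ is metrizable --- the pseudometric $\rho(f,g)=\min\{1,\sup_{x\in X}d(f(x),g(x))\}$ generates $\mathbb U_u$ --- hence $(C(X,Y),\mathbb U_u)$ has a countable base, and so does any subspace $(\mathcal F,\mathbb U_u\restriction\mathcal F)$. If moreover $(Y,d)$ is a genuine metric space and $\mathbb U_u$ is Hausdorff (which it is: if $f\neq g$ then $d(f(x),g(x))>0$ for some $x$, so $(f,g)\notin B_\varepsilon$ for small $\varepsilon$), then $(\mathcal F,\mathbb U_u\restriction\mathcal F)$ is a Hausdorff uniform space with a countable base. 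By Fact~3, since $\mathcal F$ is a subspace of the strictly $\mathsf R$-bounded space --- wait, here I should be careful: $(1)$ is the hypothesis that $(\mathcal F,\mathbb U_u\restriction\mathcal F)$ \emph{itself} is strictly $\mathsf R$-bounded, so no subspace argument is needed. Then Proposition~\ref{prop2}, applied to the Hausdorff uniform space $(\mathcal F,\mathbb U_u\restriction\mathcal F)$ with its countable base, yields directly that $\mathcal F$ is countable. That completes the argument, and the proof is essentially the one-line observation that the hypotheses of Proposition~\ref{prop2} are met; the displayed theorems immediately preceding this one follow the same template.

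I do not anticipate a serious obstacle. The only point requiring a moment's care is confirming that $(Y,d)$ being a metric space (not merely a pseudometric space) is what guarantees $\mathbb U_u$, and hence $\mathbb U_u\restriction\mathcal F$, is Hausdorff --- this is exactly the hypothesis needed to invoke Proposition~\ref{prop2}. If one wanted to be maximally explicit one could also note that the countable base on the subspace is $\{B_{1/n}\cap(\mathcal F\times\mathcal F):n\in\mathbb N\}$. Thus the proof reduces to: ``$\mathbb U_u\restriction\mathcal F$ is a Hausdorff uniformity with a countable base; apply Proposition~\ref{prop2}. The converse is clear.''
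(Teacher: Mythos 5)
Your argument is correct and is exactly the one the paper intends: the paper derives this theorem as an immediate consequence of Proposition~\ref{prop2} together with the metrizability (hence Hausdorffness and countable base) of $(C(X,Y),\mathbb U_u)$ and of its subspace $(\mathcal F,\mathbb U_u\restriction\mathcal F)$. Your additional remarks on the easy direction and on why the subspace is Hausdorff with a countable base are accurate but only make explicit what the paper leaves tacit.
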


\begin{theorem} Let $X$ be a Tychonoff, and $(Y, d)$ be a metric space. The
following are equivalent:
\begin{itemize}
\item[{\rm (1)}] $(C(X, Y ), \mathbb U_u)$ is strictly {\sf R}-bounded;

\item[{\rm (2)}] $X$ is compact metrizable, $Y$ is countable and $C(X,Y)$ is
countable.
\end{itemize}
\end{theorem}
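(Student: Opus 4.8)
The plan is to prove the non-trivial direction $(1) \Rightarrow (2)$, deducing the converse from results already available. For $(2) \Rightarrow (1)$: if $X$ is compact metrizable then $\mathbb U_u = \mathbb U_m$ and $(C(X,Y),\mathbb U_u)$ is metrizable; since $X$ is compact metrizable, $C(X,Y)$ is separable and metrizable when $Y$ is separable, and here $Y$ is countable (hence separable), so $C(X,Y)$ is separable; but we are told $C(X,Y)$ is countable, and a countable metric space is trivially strictly {\sf R}-bounded (at round $n$ TWO plays the $n$-th point of a fixed enumeration of $C(X,Y)$, using that the $U_n$ can be taken to form a decreasing base with intersection the diagonal). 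So the content is $(1) \Rightarrow (2)$.

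For $(1) \Rightarrow (2)$, first observe that $(C(X,Y),\mathbb U_u)$ is metrizable and Hausdorff (as soon as $Y$ is Hausdorff, which a metric space is), so it has a countable base. Applying Proposition~\ref{prop2} to $Z = C(X,Y)$ immediately gives that $C(X,Y)$ is countable. It remains to extract from this that $X$ is compact metrizable and $Y$ is countable. For $Y$ countable: as in the proof of Theorem~\ref{cp-omega-bound}, the map $y \mapsto f_y$ (constant functions) embeds $(Y,d)$ as a subspace of $(C(X,Y),\mathbb U_u)$ — indeed an isometric copy, since $d_\infty(f_y,f_z) = d(y,z)$ — so $Y$ injects into the countable set $C(X,Y)$ and hence is countable. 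For compactness and metrizability of $X$: since $(C(X,Y),\mathbb U_u)$ is strictly {\sf R}-bounded it is {\sf R}-bounded, hence {\sf M}-bounded, hence by the implication chain $\mathbb U_u \Rightarrow \mathbb U_p$ the space $(C(X,Y),\mathbb U_p)$ is {\sf M}-bounded; but more directly, $C(X,Y)$ being countable forces $\tau_u$ on $C(X,Y)$ to be a countable metrizable (hence second countable) topology, and the evaluation/restriction machinery then pins down $X$. The cleanest route: $C(X,Y)$ countable and $Y$ containing at least two points at distance $\delta$ forces, for each pair $x \ne x'$ in $X$, some continuous $f$ with $|f(x)-f(x')|$ large (using that $Y$ — a metric space with $\geq 2$ points — gives enough functions via Tychonoff-ness of $X$), and a standard separation argument shows $X$ has a countable network of zero-sets, which together with pseudocompactness (from {\sf M}-boundedness of $(C(X,Y),\mathbb U_p)$, invoking the relevant earlier theorem when $Y$ is rich enough) yields compact metrizable; the degenerate case where $Y$ is a single point makes $C(X,Y)$ a singleton and the statement about $X$ vacuous, so it should be handled or excluded separately.

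The main obstacle I anticipate is exactly this last step — deriving \emph{``$X$ is compact metrizable''} from the mere countability of $C(X,Y)$ together with countability of $Y$. Countability of $C(X,Y)$ is a very strong hypothesis (it already rules out, e.g., $X$ with any nontrivial convergent sequence when $|Y|\ge 2$), and the right lemma is presumably: if $(Y,d)$ has at least two points and $C(X,Y)$ is countable, then $X$ is compact metrizable. One proves this by showing first that $C(X)$ — real-valued functions — is forced to be countable as well (compose with a nonconstant continuous real function on a two-point-separated piece of $Y$), then that a Tychonoff space with $C(X)$ countable must be compact (otherwise one builds continuum-many continuous functions using an unbounded or non-convergent behaviour) and must have a countable base (the countably many cozero sets of a countable $C(X)$ form a base, since $X$ is Tychonoff). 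I would isolate this as a preliminary lemma and then the theorem follows by assembling the three pieces: Proposition~\ref{prop2} $\to$ $C(X,Y)$ countable; constant-function embedding $\to$ $Y$ countable; the lemma $\to$ $X$ compact metrizable.
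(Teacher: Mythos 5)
Your decomposition is the natural one, and two of its three pieces are exactly what the paper intends (the paper writes out no proof beyond the remark that the theorem follows from Propositions \ref{count.base} and \ref{prop2} and the metrizability of $(C(X,Y),\mathbb U_u)$): Proposition \ref{prop2} applied to the countably based Hausdorff uniform space $(C(X,Y),\mathbb U_u)$ yields countability of $C(X,Y)$; the isometric embedding of $Y$ as the constant functions yields countability of $Y$; and $(2)\Rightarrow(1)$ is the trivial observation that a countable metric space is strictly {\sf R}-bounded.

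The genuine gap is the step you yourself flag, namely deriving that $X$ is compact metrizable, and your proposed repair does not work. The lemma ``if $Y$ has at least two points and $C(X,Y)$ is countable, then $X$ is compact metrizable'' is false: for $X=\mathbb R$ and $Y=\mathbb Q$ (or a two-point discrete $Y$), connectedness of $X$ forces every $f\in C(X,Y)$ to be constant, so $C(X,Y)$ is countable while $X$ is not compact. The argument you sketch for the lemma also fails at its first step: countability of $C(X,Y)$ does not give countability of $C(X)$, since the compositions $g\circ f$ with $g\in C(Y)$ form only a small subfamily of $C(X)$; moreover, $C(X)$ countable actually forces $|X|\le 1$ (a nonconstant $f\in C(X)$ produces the uncountable family $\{cf:c\in\mathbb R\}$), so this route could never deliver the intended conclusion for a nontrivial $X$. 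The ingredient the paper relies on instead is the chain strictly {\sf R}-bounded $\Rightarrow$ {\sf R}-bounded $\Rightarrow$ {\sf M}-bounded $\Rightarrow$ $\omega$-bounded, combined with the first (unnumbered) theorem of the subsection on $(C(X,Y),\mathbb U_u)$ and $(C(X,Y),\mathbb U_m)$, which identifies $\omega$-boundedness of $(C(X,Y),\mathbb U_u)$ with ``$X$ compact metrizable and $Y$ separable.'' Note, however, that the example $X=\mathbb R$, $Y=\mathbb Q$ satisfies (1) (the space $(C(X,Y),\mathbb U_u)$ is then a countable metric space) but not (2), so the theorem as stated --- like the quoted $\omega$-boundedness characterization --- needs a nondegeneracy hypothesis on $Y$; your instinct to isolate the degenerate cases is right, but the problem is not confined to a one-point $Y$.
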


We have the following result.

\begin{theorem} \label{Y-Frechet} Let $X$ be a Tychonoff space and $Y$ be an infinite
closed convex subset of a Fr\'echet (= locally convex completely
metrizable) space $(Z,d)$ with a translation-invariant metric $d$ on
$Z$. The following are equivalent:
\begin{itemize}
\item[{\rm (1)}] $(C(X, Y),\tau_m)$ is $\sigma$-compact;

\item[{\rm (2)}] $(C(X,Y),\mathbb U_m)$ is  {\sf H}-bounded;

\item[{\rm (3)}] $(C(X, Y), \mathbb U_u)$ is {\sf H}-bounded;

\item[{\rm (4)}] $(C(X,Y), \tau_u)$ is $\sigma$-compact;

\item[{\rm (5)}] $X$ is finite and $Y$ is $\sigma$-compact;

\item[{\rm (6)}]  $(C(X,Y), \mathbb U_m)$ is strictly  {\sf M}-bounded;

\item[{\rm (7)}]  $(C(X, Y), \mathbb U_u)$ is strictly {\sf M}-bounded.
\end{itemize}
\end{theorem}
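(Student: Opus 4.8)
The strategy is to establish a cycle of implications that routes everything through condition (5). The easy directions first: (5) $\Rightarrow$ (1) holds because if $X$ is finite, say $|X| = n$, then $C(X,Y)$ is (uniformly homeomorphic to) $Y^n$, and a finite power of a $\sigma$-compact space is $\sigma$-compact; moreover when $X$ is finite the uniformities $\mathbb U_m$ and $\mathbb U_u$ coincide, so (1) and (4) become the same statement. From either $\sigma$-compactness statement one gets strict Hurewicz boundedness via Fact 4 (a $\sigma$-compact metric space is $\sigma$-totally bounded), hence (1)/(4) $\Rightarrow$ the {\sf H}-bounded and strictly {\sf M}-bounded conditions (2),(3),(6),(7); the implications among (2),(3),(6),(7) in the ``downward'' direction follow from the general diagram and from $\mathbb U_m \Rightarrow \mathbb U_u$ for each property, plus Proposition~\ref{count.base} once we know $(C(X,Y),\mathbb U_u)$ is metrizable (which it always is). So the real content is a single hard implication: some weak hypothesis among (2),(3),(4),(6),(7) forces $X$ to be finite (and $Y$ to be $\sigma$-compact). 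Since $Y$ is completely metrizable, $Y$ being $\sigma$-compact once $X$ is finite will be automatic or will come from the $\omega$-boundedness/separability side; the crux is finiteness of $X$.

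\textbf{The main step.} Assume $(C(X,Y),\mathbb U_u)$ is {\sf H}-bounded (the weakest of (2),(3)) — or strictly {\sf M}-bounded (weakest of (6),(7)) — and suppose toward a contradiction that $X$ is infinite. Then $X$ contains a countably infinite subset, and using the Tychonoff property one would like to extract a discrete-like family or at least a sequence $(x_k)$ on which functions can be prescribed independently up to uniform closeness. The key point is that $Y$ is infinite, closed, convex in a Fréchet space with translation-invariant metric, hence $Y$ is \emph{unbounded}: an infinite convex set in a topological vector space cannot be $d$-bounded (it contains a line segment, and convexity plus infiniteness forces arbitrarily long segments or a whole ray). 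Non-boundedness of $Y$ is exactly what drives the failure of Menger boundedness in Proposition~\ref{prop3} and in the proof of Theorem~\ref{cp-m-bound}. So the plan is to build, from the infinite set in $X$, a uniformly continuous surjection from $(C(X,Y),\mathbb U_u)$ onto $(C(D,Y),\mathbb U_u)$ for a suitable countable closed discrete (or $C$-embedded) $D \subset X$ — note $C(D,Y) = Y^{\mathbb N}$ with a uniformity refining the product uniformity — and then show $Y^{\mathbb N}$ fails to be {\sf M}-bounded, invoking Fact~2 (uniformly continuous images preserve {\sf M}-boundedness) and Fact~3 to get a contradiction. The arcwise-connectedness-type extension argument from Theorem~\ref{cp-m-bound} is replaced here by convexity: arbitrary bounded functions $D \to Y$ extend, because convex sets are arcwise connected and one can interpolate linearly.

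\textbf{Expected obstacle.} The delicate part is that $\mathbb U_u$ is a strictly finer uniformity than $\mathbb U_p$, so the restriction map $C(X,Y) \to C(D,Y)$ need only be uniformly continuous into $(C(D,Y),\mathbb U_u)$, and on a closed discrete $D$ the sup-metric uniformity is \emph{not} the product uniformity — it is the uniformity of uniform convergence on the discrete space, i.e. the box-type sup uniformity on $Y^{\mathbb N}$. So I must check that $(Y^{\mathbb N}, \sup\text{-}d)$ still fails {\sf M}-boundedness; but this is actually easier than the product case, since a single entourage $B_\varepsilon$ already controls all coordinates simultaneously, and non-boundedness of $Y$ lets one diagonalize against any sequence of finite sets exactly as in Proposition~\ref{prop3}. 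The other point requiring care is ensuring the extension $f^* \colon X \to Y$ of a prescribed $f \colon D \to Y$ is genuinely continuous and lands in $Y$; convexity and closedness of $Y$ in $Z$, together with choosing $D$ so that $\overline{D}\setminus D$ can be sent to a single fixed point of $Y$, make this work, but one should double-check that an infinite Tychonoff $X$ always admits such a $D$ — if $X$ has an infinite compact subset the argument must instead pass through compactness (forcing $X$ finite would then need $X$ to have no nontrivial convergent sequences unless finite), so the cleanest route is to split on whether $X$ is pseudocompact: if not, use the discrete family as in Theorem~\ref{cp-m-bound}; if it is pseudocompact but infinite, then $C(X,Y)$ with $\mathbb U_u$ contains an isometric copy of an infinite-dimensional bounded-below configuration in $Y$, again contradicting the relevant boundedness. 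Handling the pseudocompact-but-infinite case uniformly is where I expect to spend the most effort.
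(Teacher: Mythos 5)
Your plan founders on one concrete false claim: that an infinite closed convex subset $Y$ of a Fr\'echet space with translation-invariant metric must be $d$-unbounded (``convexity plus infiniteness forces arbitrarily long segments or a whole ray''). This is not true: $[0,1]\subset\mathbb R$, or the closed unit ball of an infinite-dimensional Banach space, or the Hilbert cube sitting convexly in $\ell^2$, are all infinite, closed, convex and bounded, and each satisfies every hypothesis of the theorem. Non-boundedness of $Y$ is an \emph{explicit additional hypothesis} in Proposition \ref{prop3} and Theorem \ref{cp-m-bound}, and it is precisely what makes the diagonalization against finite sets work there; it is not available here. With $Y=[0,1]$ your intended target ``$Y^{\mathbb N}$ is not {\sf M}-bounded'' is simply false for the product uniformity ($[0,1]^{\mathbb N}$ is compact), and for the sup-metric uniformity it fails for a different reason (an uncountable uniformly discrete subset) that your diagonalization does not produce. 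So the entire reduction-to-$Y^{\mathbb N}$ strategy, borrowed from Theorem \ref{cp-m-bound}, does not apply to this theorem. In addition, you explicitly leave open the case where $X$ is pseudocompact and infinite, which is not a side issue: once one knows $(C(X,Y),\tau_u)$ is separable, $X$ is forced to be \emph{compact} metrizable, so the infinite compact case is in fact the only case that remains, and your split ``pseudocompact vs.\ not'' resolves the wrong half.

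The paper's route avoids both problems by never using unboundedness of $Y$ and by working locally rather than through a covering failure. From (3), metrizability and completeness of $(C(X,Y),\mathbb U_u)$ turn {\sf H}-boundedness into $\sigma$-total boundedness (Proposition \ref{count.base} / Theorem \ref{thm-shb}) and hence into $\sigma$-compactness of $(C(X,Y),\tau_u)$, giving (4). For (4) $\Rightarrow$ (5), separability forces $X$ compact metrizable; assuming $X$ infinite, one picks a nontrivial convergent sequence $x_n\to x$, uses the Baire category theorem to place a sup-ball $H$ around some $f$ inside one of the compact pieces $K_n$, and then, using only that $Y$ is infinite and convex (so there is $y\in Y$ with $0<d(y,f(x))$ small and a convex neighbourhood $L$ of $f(x)$ in $Y$) together with Dugundji's extension theorem, builds functions $g_n\in H$ that pairwise differ by at least $\alpha/2$ in sup metric. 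That contradicts compactness of $K_n$ regardless of whether $Y$ is bounded. If you want to salvage your approach you must replace the appeal to Proposition \ref{prop3} by an argument of this local, Arzel\`a--Ascoli/Baire type.
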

\begin{proof} (1) $\Rightarrow$ (2) and (2) $\Rightarrow$ (3) are trivial. Also (3) $\Rightarrow$
(4) follows from Theorem \ref{thm-shb}. [The  {\sf H}-boundedness of
$(C(X, Y), \mathbb U_u)$ implies its $\sigma$-total boundedness, and
the completeness of $(C(X, Y), \mathbb U_u)$ equipped with the sup
metric gives us the $\sigma$-compactness of $(C(X, Y),\tau_u)$.]

\smallskip
(4) $\Rightarrow$ (5) $X$ must be compact and metrizable since
$(C(X,Y),\tau_u)$ is separable. Suppose that $X$ is infinite and
$\rho$ is a compatible metric on $X$. Let $(Z,d)$ be a Fr\'echet
space such that $Y$ is an infinite closed convex subset of $(Z,d)$.

Let $\{K_n : n \in\mathbb N\}$ be a sequence of compact sets in
$(C(X, Y),\tau_u)$ such that
\[
C(X,Y) = \bigcup_{n\in\mathbb N}K_n.
\]
There must exist $n \in\mathbb N$ such that $K_n$ has a nonempty
interior. Let $f \in C(X,Y)$ and $\varepsilon > 0$ be such that
\[
H = \{g \in C(X,Y): d(f(x), g(x)) < \varepsilon\} \subset K_n.
\]
There must exist a sequence $(x_n)_{n \in \mathbb N}$ of different
points in $X$ converging to a point $x$. Clearly,
$(f(x_n))_{n\in\mathbb N}$ converges to $f(x)$. Let $V$ be an open
convex neighbourhood of the origin 0 of $Z$ such that $V \subset \{z
\in Z: d(z, 0) < \varepsilon/4\}$. Put
\[
L = (f(x) + V ) \cap Y.
\]
Then, of course, $L$ is also a convex set.

Choose $y \in L$, $y\neq f(x)$,  and put $\alpha = d(y, f(x))$.
Without loss of generality we can suppose that
\[
d(f(x), f(x_n)) < \alpha/2 \mbox{ for every } n \in\mathbb N.
\]
There is a sequence $(\eta_n)_{n\in\mathbb N}$ of positive reals
such that
\[
B(x_n, \eta_n) \cap B(x_m, \eta_m) = \emptyset \mbox{ for every } m
\neq n,
\]
and such that
\[
f(B(x_n, \eta_n)) \subset L \mbox{ for every } n \in\mathbb N.
\]
Let $n \in\mathbb N$. Put
\[
C_n = (B(x_n, \eta_n) \setminus S(x_n, \eta_n)) \cup \{x_n\}.
\]
Define the function $f_n : C_n \to L$ as follows:
\[
f_n(z) = \left\{ \begin{array}{ll} y, & \hbox{if $z=x_n$,}\\
f(z), & \hbox{otherwise.}
\end{array}\right.
\]
By Dugundji's extension theorem \cite{dugundji}, there is a
continuous extension $f_n^{\ast}: B(x_n, \eta_n) \to L$ of $f$. It
is easy to verify that the function $g_n : X \to Y$ defined by
\[
g_n(x) =\left\{
                 \begin{array}{ll}
                   f_n^{\ast}(x), & \hbox{for $x \in B(x_n, \eta_n)$;} \\
                   f(x), & \hbox{otherwise.}
                 \end{array}
               \right.
\]
is a continuous function from $X$ to $Y$. Realize that for $n \neq
m$ we have
\[
\sup\{d(g_n(x), g_m(x)) : x \in X\} \ge \alpha/2.
\]
Of course, $\{g_n: n\in \mathbb N\} \subset  K_n$, a contradiction,
since the sequence $\{g_n : n \in\mathbb N\}$ has no cluster point
in $(C(X, Y),\tau_u)$. Thus $X$ must be finite. Then $(C(X, Y),
\tau_u) = (C(X, Y),\tau_p)$ is $\sigma$-compact. Thus also $Y$ must
be $\sigma$-compact.

\smallskip
(5) $\Rightarrow$ (1) is clear. Further, (5) $\Rightarrow$ (6) is
evident, (6) $\Rightarrow$ (7) is clear, and (7) $\Rightarrow$ (3)
by Proposition \ref{count.base}. \, \, $\Box$
\end{proof}

\section*{Acknowledgements}

The first author would like to thank the support of Vega 2/0006/16.

\footnotesize{

}

\end{document}